\documentclass[11pt ]{amsart}
\usepackage{amsfonts}
\usepackage{mathrsfs}
\usepackage{amscd}
\usepackage{amsmath}
\usepackage{amssymb}
\usepackage{latexsym}
\usepackage{lscape}
\usepackage{xypic}

\newtheorem{theorem}{Theorem}[section]
\newtheorem{corollary}[theorem]{Corollary}
\newtheorem{lemma}[theorem]{Lemma}

\newtheorem{prop}[theorem]{Proposition}
\newtheorem{conjecture}[theorem]{Conjecture}


\theoremstyle{definition}
\newtheorem{definition}[theorem]{Definition}
\newtheorem{remark}[theorem]{Remark}
\newtheorem{example}[theorem]{Example}

\newcommand{\fg}{{\mathfrak g}}
\newcommand{\ft}{{\mathfrak t}}

\newcommand{\fb}{{\mathfrak b}}

\newcommand{\fs}{{\mathfrak s}}

\newcommand{\fa}{{\mathfrak a}}

\newcommand{\fc}{{\mathfrak c}}

\newcommand{\rW}{{\mathrm W}}

\newcommand{\bC}{{\mathbb C}}
\newcommand{\bX}{{\mathbb X}}
\newcommand{\bG}{{\mathbb G}}
\newcommand{\bZ}{{\mathbb Z}}
\newcommand{\bR}{{\mathbb R}}

\newcommand{\mE}{\mathcal{E}}
\newcommand{\mF}{\mathcal{F}}

\newcommand{\mO}{\mathcal{O}}

\newcommand{\on}{\operatorname}

\newcommand{\ra}{\rightarrow}

\newcommand{\is}{\simeq}

\newcommand{\Bun}{\on{Bun}}

\newcommand{\beq}{\begin{equation}}
\newcommand{\eeq}{\end{equation}}

\newcommand{\quash}[1]{}  
\newcommand{\nc}{\newcommand}

\newcommand{\bbZ}{{\mathbb Z}}

\newcommand{\calF}{{\mathcal F}}

\nc{\al}{{\alpha}} \nc{\be}{{\beta}} \nc{\ga}{{\gamma}}
\nc{\ve}{{\varepsilon}} \nc{\Ga}{{\Gamma}} 
\nc{\La}{{\Lambda}}

\nc{\ad}{{\on{ad}}}
\newcommand{\Ad}{{\on{Ad}}}
\nc{\aff}{{\on{aff}}} \nc{\Aff}{{\mathbf{Aff}}}
\newcommand{\Aut}{{\on{Aut}}}

\nc{\der}{{\on{der}}}

\nc{\diag}{{\on{diag}}}
\newcommand{\End}{{\on{End}}}
\nc{\Fl}{{\calF\ell}}

\newcommand{\Hom}{{\on{Hom}}}
\newcommand{\id}{{\on{id}}}
\nc{\Id}{{\on{Id}}}

\nc{\Ind}{{\on{Ind}}}
\newcommand{\Lie}{{\on{Lie}}}

\newcommand{\Res}{{\on{Res}}}
\nc{\res}{{\on{res}}}

\newcommand{\Spec}{{\on{Spec}}}

\nc{\tr}{{\on{tr}}}

\nc{\GSp}{{\on{GSp}}} \nc{\GU}{{\on{GU}}} \nc{\SL}{{\on{SL}}}
\nc{\SU}{{\on{SU}}} \nc{\SO}{{\on{SO}}}

\nc{\bU}{{\overline{U}}} \nc{\IC}{{\on{IC}}} \nc{\rH}{{\on{H}}}

\nc{\dG}{{G^\vee}} \nc{\dT}{{T^\vee}} \nc{\Zl}{{\bbZ_\ell}}
\nc{\btw}{\Bun_T^W} \nc{\tilX}{{\tilde{X}}} \nc{\tC}{\tilde{C}}
\nc{\tU}{\tilde{U}} \nc{\AJ}{{\on{AJ}}}

\newcommand{\Lg}{{^\sigma\tilde\fg}}

\setlength{\parskip}{2ex}
\setlength{\oddsidemargin}{0in}
\setlength{\evensidemargin}{0in}
\setlength{\textwidth}{6.0in}
\setlength{\topmargin}{-0.15in}
\setlength{\textheight}{8.6in}

\quash{
\topmargin-0.5cm \textheight22cm \oddsidemargin1.2cm \textwidth14cm}
\begin{document}
\title{Vinberg's $\theta$-groups and rigid connections}
        \author{Tsao-Hsien Chen}
        \address{Department of Mathematics, University of Chicago, Chicago, IL 60637, USA.}
        \email{chenth@math.uchicago.edu}
        \address{}
         \email{}

\subjclass[2010]{17B67, 22E50, 22E57}

\maketitle

\begin{abstract}
Let $G$ be a simple complex group of adjoint type. 
In his unpublished work, Z. Yun associated to each $\theta$-group $(G_0,\fg_1)$
and a vector $X\in\fg_1$ a flat $G$-connection $\nabla ^X$ 
on the trivial $G$-bundle 
on $\mathbb P^1-\{0,\infty\}$,
generalizing the construction of Frenkel and Gross in \cite{FG}. 
In this paper we study the local monodromy of 
those flat $G$-connections and
compute the de Rham cohomology of $\nabla^X$
with values in the adjoint representations of $G$. 
In particular, we show that in many cases the connection 
$\nabla^X$ is cohomologically rigid.

\end{abstract}
 \setcounter{tocdepth}{1} \quash{\tableofcontents}
\section{Introduction}
\subsection{The goal}
Let $G$ be a simple complex algebraic group of adjoint type.
Motivated by Langlands correspondence, Frenkel and Gross \cite{FG} 
constructed a 
flat $G$-connection $\nabla$ on the trivial $G$-bundle on
$\mathbb P^1-\{0,\infty\}$ with following remarkable properties:
\begin{enumerate}
\item $\nabla$ has a regular singularity at $0$, and 
the residue is a regular nilpotent element in the Lie algebra $\fg$ of 
$G$.
\item $\nabla$ has an irregular singularity at $\infty$ with slope $1/h$,
where $h$ is the Coxeter number of $G$ (see \cite[\S5]{FG} or \cite[\S2.2]{CK}) for the definition of 
slope). 
\item 
$\nabla$ is \emph{cohomologically rigid}, i.e.,
we have $H^*(\mathbb P^1,j_{!*}\nabla^{\Ad})=0$,  
here $\nabla^\Ad$ is the $D$-module defined by the connection $\nabla$ with
values in the adjoint representation of $G$ and 
$j_{!*}\nabla^{\Ad}$ is the intermediate extension of the $D$-module $\nabla^\Ad$ to 
$\mathbb P^1$ along $j:\mathbb P^1-\{0,\infty\}\ra\mathbb P^1$.

\end{enumerate}
The construction used the $\theta$-group $(G_0,\fg_1)$ studied by Vinberg and his school, 
which comes from a $\bZ/h\bZ$-grading on $\fg$.

In his unpublished work, Z.Yun generalized the Frenkel-Gross's construction to \emph{all} $\theta$-groups. More precisely, 
starting with a $\theta$-group $(G_0,\fg_0)$, he constructed a family of flat $G$-connections $\nabla^X$ on $\mathbb P^1-\{0,\infty\}$
parametrizing by vectors $X\in\fg_0$. We called $\nabla^X$ the $\theta$-connection associated to $X\in\fg_1$.

The goal of this paper is to study properties of $\theta$-connections $\nabla^X$. 
In more detail, recall that each $\theta$-group $(G_0,\fg_1)$ corresponds to 
a torsion automorphism of $\fg=\Lie G$, which we also denote by 
$\theta\in\Aut(\fg)$. Let $\sigma$ be the image of $\theta$ in $\on{Out}(\fg)$, the group of outer automorphism of $\fg$.
We establish the following properties of $\nabla^X$, parallel to the properties (1), (2) and (3) above:
\begin{enumerate}
\item $\nabla^X$ has a regular singularity at $0$, and 
the residue $\Res(\nabla^X)$ is a nilpotent element in the Lie algebra $\fg^\sigma$. Moreover, for generic vectors $X\in\fg_1$ the residues $\Res(\nabla^X)$ lie in a single nilpotent orbit in $\fg^\sigma$.
\item $\nabla^X$ has an irregular singularity at $\infty$ with slope $e/m$ for any semi-simple $X\in\fg_1$.   
Here $m$ (resp. $e$) is the order of $\theta\in\Aut(\fg)$ (resp. $\sigma\in\on{Out}(\fg)$).
\item 
Assume $\theta$ is \emph{stable} with normalized \emph{Kac coordinates}
$s_0=1$ (see \S\ref{setup} for the definition of stable automorphism and 
normalized Kac coordinates). Then for any stable element $X\in\fg_1$ the 
connection $\nabla^X$ is \emph{cohomologically rigid}.
\end{enumerate}

\subsection{}
Let me explain briefly how these properties are obtained in the untwisted case $\sigma=id$.
Properties (1) and (2) follow basically from the construction. 
To prove property (3), we need to show that the cohomology groups
$H^*(\mathbb P^1,j_{!*}\nabla^{X,\Ad})$ vanishes (see Definition \ref{coh-rigid}). 
Here we follow an argument in \cite{FG}. First, it follows from a general result in \cite[\S 7]{FG} that
those cohomology groups
are isomorphic to kernels of certain $\bC$-linear maps
on the loop algebra $\tilde\fg=\fg[t,t^{-1}]$. 
So we reduce to show that kernels of those maps vanish.
In the case of \cite{FG},
the authors observe that if we 
consider the
\emph{principal grading} on the loop algebra $\tilde\fg$
(see Example \ref{principal grading}), 
then in terms of a homogeneous basis of $\tilde\fg$ (with respect to the principal grading), the relevant maps become 
more tractable and 
the desired cohomology vanishing follows from Kac's Theorem on
\emph{principal}
Heisenberg subalgebras of affine Kac-Moody algebras. Now our observation is 
that for general $\theta$-connections $\nabla^X$, if we consider the 
\emph{Kac grading} on $\tilde\fg$ corresponding to the 
torsion automorphism $\theta$ (see \S\ref{KMP}), then the relevant maps
again become more tractable
and the desired cohomology vanishing
follows from 
the results 
in 
\cite{Kac,V,RLYG,RY}
about gradings on Lie algebras (known as Vinberg's theory of $\theta$-groups), and 
a generalization of 
Kac's Theorem to 
\emph{general}
Heisenberg subalgebras of affine Kac-Moody algebras (see Proposition \ref{non-degenerate}
and Remark \ref{genreal Heisenberg}).

\subsection{Relation with \cite{Yun} and ramified geometric Langlands}
In \cite{Yun}, starting with a stable torsion automorphism 
$\theta\in\Aut(\check\fg)$ for the Langlands dual of $\fg$
and a stable functional $\phi\in\check\fg_1^{*,s}$,
the author constructed a remarkable $\ell$-adic $G$-local system 
$\on{KL}_{G}(\phi)$ on $\mathbb P^1-\{0,\infty\}$, which generalized 
his early work in \cite{HNY} with Heinloth and Ng$\hat{\on o}$.   
This $\ell$-adic local system is tamely ramified at $0$
and ramified at $\infty$.
He furthermore described the monodromy of $\on{KL}_{G}(\phi)$ at $0$ 
and conditionally deduced the cohomologically rigid of $\on{KL}_{G}(\phi)$
(see \cite[Theorem 4.7 and Proposition 5.2]{Yun}).
The construction can
carry out over the complex number with $\ell$-adic sheaves replaced
by $D$-modules. Thus, starting with a stable automorphism 
$\theta$ of $\check\fg$ and a stable function $\phi\in\check\fg_1^{*,s}$, we get a flat $G$-connection $\on{KL}_{G}(\phi)_{dR}$ on $\mathbb P^1-\{0,\infty\}$. The result of this paper gives strong evidence 
of the following conjecture:
\begin{conjecture}[Z.Yun and \cite{HNY} Conjecture 2.14]\label{Yun's conj}
There is a bijection between the set of stable linear functions $\phi\in\fg_1^{*,s}$ and the 
set of stable vectors $X\in\check\fg_1^s$, such that whenever $\phi$ corresponds to $X$
under this bijection, there is a natural isomorphism between 
$\sigma$-twisted flat $\check G$-connection on $\bG_m$
\[\on{KL}_{\check G}(\phi)\is\nabla^X.\]
\end{conjecture}

The solution of the conjecture above will provide 
many interesting examples of geometric Langlands correspondences 
with wild ramifications. 
When $m=h$ is the Coxeter number, i.e., in the
Frenkel-Gross case (see \S\ref{FG case}), the conjecture above was proved in \cite{Zhu} 
using a ramified version of Beilinson-Drinfeld's work on quantization of 
Hitchin's integrable systems. We plan to extend the methods in \cite{Zhu} to more general 
stable automorphisms.

\quash{
\subsection{Relation with \cite{Yun} and ramified geometric Langlands}\label{conj}
In \cite{Yun}, starting with a stable torsion automorphism $\theta=\theta'\rtimes\sigma\in\Aut(\fg)$ and a stable linear function 
$X\in\fg_1^{*,s}$, 
the author 
construct an $\sigma$-twisted $\ell$-adic $\check G$-local system on $\bG_m$. 
One can carry out above construction 
over the complex number with $\ell$-adic sheaves replaced by $D$-modules. Starting 
with a stable torsion automorphism $\theta$ of $\fg$ over $\bC$ and a stable linear function $\phi\in\fg_1^{*,s}$, we get 
a $\sigma$-twisted flat $\check G$-connection $\on{KL}_{\check G}(\phi)$ on $\bG_m$. Fixing $\sigma\in\Aut(R,\Delta)$, the 
stable torsion automorphism $\theta$ is completely determined by a number $m$, so that 
$\theta$ is $G$-conjugate to $\check\rho(\xi_m)\rtimes\sigma$. 
Consider the 
stable automorphism $\theta$ for the dual Lie algebra $\check\fg$ determined by $\sigma$ and $m$.
\begin{conjecture}[\cite{Yun1} and \cite{HNY} Conjecture 2.14]\label{Yun's conj}
There is a bijection between the set of stable linear functions $\phi\in\fg_1^{*,s}$ and the 
set of stable vectors $X\in\check\fg_1^s$, such that whenever $\phi$ corresponds to $X$
under this bijection, there is a natural isomorphism between 
$\sigma$-twisted flat $\check G$-connection on $\bG_m$
\[\on{KL}_{\check G}(\phi)\is\nabla^X.\]
\end{conjecture}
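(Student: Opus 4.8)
The natural strategy is to deduce the isomorphism from \emph{rigidity} rather than to construct it by hand: both $\nabla^X$ and $\on{KL}_{\check G}(\phi)$ are cohomologically rigid (the former by Theorem~\ref{untwisted}, the latter by \cite[Proposition 5.2]{Yun}), so each should be pinned down by its local behavior at $0$ and $\infty$, and the conjecture reduces to matching local data across a bijection of parameters. The first step is therefore to establish a \emph{uniqueness principle}: a cohomologically rigid $\sigma$-twisted flat $\check G$-connection on $\bG_m$ is determined up to isomorphism by its formal isomorphism type at $0$ and at $\infty$. This is the de Rham analogue of Katz's rigidity theorem; for reductive $\check G$ one passes to the Tannakian picture, applies the $\GL(V)$ statement to every representation $V$ of $\check G$, and uses the Euler-characteristic/index computation already appearing in \S\ref{main result} (Deligne's formula together with the exact sequence \eqref{H_c^1}) to force the relevant deformation space to vanish once the local types are fixed. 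The infinitesimal input is exactly the vanishing $H^1(\mathbb P^1,j_{!*}\bar\nabla^{X,\Ad})=0$ proved above; the work is to upgrade it to an honest uniqueness statement, which also needs irreducibility of $\bar\nabla^{X,\Ad}$ so that the automorphism group is central.

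Next I would upgrade \S\ref{Res at 0} and the slope computation to a full description of the formal type of $\nabla^X$. At $0$: the connection is regular with unipotent monodromy whose residue, for stable $X$, lies in the single orbit $\mO_\theta$; proving this for \emph{all} stable $X\in\check\fg_1^s$ — the expectation recorded in \S\ref{Res at 0} — is part of the job, and should follow from the weighted-Dynkin-diagram description of $\mO_\theta$ in terms of the Kac coordinates of $\theta$. At $\infty$: from \eqref{cover}, after the cover $t=a^{-m/e}$ and the gauge transformation by $\check\lambda^{-1}(a)$, the connection becomes $d-\frac{m}{e}X\frac{da}{a^2}+\check\lambda\frac{da}{a}$ with $X$ regular semisimple, so its formal type is the regular ``elliptic'' type attached to the regular elliptic class $w\in\rW\sigma$ corresponding to $\theta$ (Remark~\ref{genreal Heisenberg}), with the residual torus parameter read off from $X$.

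On the Kloosterman side one invokes Yun's computations in \cite{Yun}: $\on{KL}_{\check G}(\phi)$ is tamely ramified at $0$ with unipotent monodromy in the same orbit $\mO_\theta$ (this is the content of the map \eqref{Yun's map}), and at $\infty$ it has slope $e/m$ with formal type governed by the same regular elliptic class in $\rW\sigma$. The bijection $\phi\leftrightarrow X$ is then the identification of the two parameter spaces: the invariant of $X\in\check\fg_1^s$ under Chevalley restriction for the $\theta$-group, and the analogous invariant of $\phi\in\fg_1^{*,s}$, both identified with the quotient of a Cartan subalgebra of the $\theta$-group by its finite stabilizer — equivalently, with the torus parameter of the elliptic formal type at $\infty$. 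With this bijection fixed and all local types matched at both points, the uniqueness principle of the first step yields $\on{KL}_{\check G}(\phi)\is\nabla^X$. (An alternative, in the spirit of \cite{Zhu}, is to identify both connections with a single de Rham Hecke eigensheaf, but the rigidity route isolates the essential input more transparently.)

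The main obstacle is the uniqueness principle for general $\check G$: cohomological rigidity ($\chi=0$) is strictly weaker than physical rigidity, and matching the \emph{irregular} formal types at $\infty$ between a connection produced by the Kloosterman construction and one built from a $\theta$-group is delicate — one must check that the two ``formal monodromies'' (each a $\sigma$-twisted regular semisimple element of $\check G$ together with a character) genuinely coincide under the proposed bijection, not merely up to the ambiguities inherent in the Stokes data. In the case $m=h$ this program was carried out by Zhu \cite{Zhu}; the expectation is that his argument, combined with the Kac--Moy--Prasad-graded description of $\Lg$ used throughout this paper, extends to arbitrary stable $\theta$.
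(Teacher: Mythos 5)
The statement you are trying to prove is stated in the paper as a \emph{conjecture}, and the paper offers no proof of it: the only case known is $m=h$ (the Frenkel--Gross case), established in \cite{Zhu} by identifying both sides with a Hecke eigensheaf via a ramified version of Beilinson--Drinfeld quantization, and the author explicitly defers the general case to future work. So there is no ``paper proof'' to compare against, and your text should be judged as a proposed strategy. As such it is a reasonable outline, but it is not a proof, and you have in fact named the fatal gap yourself: the ``uniqueness principle'' in your first step is not available. Cohomological rigidity, in the sense of Definition~\ref{coh-rigid}, is the vanishing of $H^1(\mathbb P^1,j_{!*}\bar\nabla^{\Ad})$, i.e.\ an infinitesimal statement about first-order deformations; it does not imply that a $\sigma$-twisted flat $\check G$-connection on $\bG_m$ is determined up to isomorphism by its formal types at $0$ and $\infty$. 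Even for $\GL_n$ the passage from cohomological to physical rigidity requires irreducibility and a genuine moduli-theoretic argument, and for general reductive $\check G$ the Tannakian reduction you sketch (applying the $\GL(V)$ statement representation by representation) does not assemble into a statement about the $\check G$-connection itself. Without this step the entire reduction to ``matching local data'' collapses.

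There are two further unproved inputs you rely on. First, the claim that $\Res(\nabla^X)\in\mO_\theta$ for \emph{all} stable $X$ is only an expectation in \S\ref{Res at 0}, verified in the paper case by case in \S\ref{examples}; you cannot cite it as known. Second, the bijection $\phi\leftrightarrow X$ is part of what the conjecture asserts, not something you may presuppose: the conjecture does not specify it canonically, and your proposed identification via Chevalley restriction and torus parameters at $\infty$ would itself need to be constructed and shown compatible with the (unknown) full irregular types, including the discrete ambiguities in the formal monodromy that you correctly flag. In short: your program is a plausible roadmap for attacking the conjecture, broadly consistent with how the $m=h$ case was resolved, but each of its three pillars (physical rigidity, residue orbit for all stable vectors, matching of irregular formal types under a constructed bijection) is open, so no part of the conjecture is actually established by your argument.
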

The result of this paper can be viewed as an evidence of the conjecture above.
When $m=h$ is the Coxeter number, i.e., in the
Frenkel-Gross case (see \S\ref{FG case}), the conjecture above was proved in \cite{Zhu} 
using a ramified version of Beilinson-Drinfeld's work on quantization of 
Hitchin's integrable systems. 
With X. Zhu and M. Kamgarpour,   
we plan to extend the methods in \cite{Zhu} to more general 
stable automorphisms.

}

\subsection{}
The paper is organized as follows.
In \S\ref{grading} we give a review of Vinberg's theory of $\theta$-groups. 
In \S\ref{KMP} we recall Kac gradings for loop algebras 
and Kac's theories on automorphism of loop algebras and Heisenberg subalgebras of 
affine Kac-Moody algebras. In \S\ref{Yun's connection} we recall Yun's 
construction of $\theta$-connections $\nabla^X$. 
We compute the residue of $\nabla^X$ at $0$ 
and the slope and irregularity at $\infty$. 
In \S\ref{main result} we prove the main results of this paper:
In Theorem \ref{rigid}, assume the $\theta$-group $(G_0,\fg_1)$ is regular, i.e., $\fg_1$ contains 
regular semi-simple elements, we compute
the de Rham cohomology of the $\theta$-connections with values in the adjoint representation.
In Theorem \ref{untwisted}, 
assume the $\theta$-group is stable and with normalized Kac coordinates $s_0=1$,
we establish the cohomological rigidity of $\nabla^X$.
Finally, in \S\ref{examples} we give several examples of $\theta$-connections.

\subsection*{Acknowledgement}
The author is grateful to Z. Yun for allowing him to use his unpublished result and for
many helpful discussions. 
He also thanks X. Zhu and M.
Kamgarpour for inspiring conversations.
The main part of this work was accomplished when the author was visiting the 
Max Planck Institute for Mathematics in Bonn. He thank the institution for the wonderful working atmosphere.

\section{Gradings on simple Lie algebras}\label{grading}

\subsection{Notation}
Let $\fg$ be the Lie algebra of a simple complex algebraic group $G$ of adjoint type.
Let $\ell$ be the rank of $\fg$.
Let $B$ be a Borel subgroup of $G$ and let $T\subset B$ be maximal torus.
We denote by $\fg=\Lie G$, $\ft=\Lie T$ and $\fb=\Lie B$. 
We let $\Ad:G\ra\Aut(\fg)$ (resp. $\ad:\fg\ra\End(\fg)$) denote the adjoint representation of 
$G$ (resp. $\fg$).
For any element 
$x\in\fg$ we denote by $\fg^x$ the kernel of $\ad(x)$. 

Let $\bX$ (resp. $\check\bX$) be the weight lattices (resp. coweight lattices) of $T$, and $R$ (resp. $\check R$)
be the set of roots (resp. co-roots) of $T$ in $G$.
We fixed a pinning $(\bX,R,\check\bX,\check R,\{E_i\})$, where $E_i\in\fg$ is a root vector for the simple 
roots $\alpha_i\in\Delta$. 

Let $\Aut(R)$ be the subgroup of $\Aut(\bX)$ preserving $R$ and let $\Aut(R,\Delta)$
be the subgroup of $\Aut(R)$ preserving $\Delta$.
The choice of pinning induced an isomorphism $\Aut(\fg)=G\rtimes\Aut(R,\Delta)$. 
Let $\exp:V:=\check\bX\otimes\bR\ra T$ be the exponential map given by
$\alpha(\exp(x))=e^{2\pi i\alpha(x)}$, for all $\alpha\in\bX$. 

For any $m\in\bZ^\times$, we let $\xi_m=e^{\frac{2\pi i}{m}}$.
For any $\bC$-vector space $V$, we denote by $V^*=\Hom(V,\bC)$ the dual of $V$.

\subsection{Affine simple roots}\label{setup}\label{affine simple roots}
In this subsection we collect some basic definitions and properties of twisted affine Kac-Moody algebra and 
affine simple roots. For details, see \cite[\S8]{Kac}.

Let $\sigma\in\Aut(R,\Delta)$ and let $e$ be the order of $\sigma$. 
We have $e=1$ or $e=2$ (type $A$, $D$, $E_6$) or $e=3$ (type $D_4$).
Consider the affine Kac-Moody algebra 
$\hat\fg=\tilde\fg\oplus\bC K\oplus\bC\mathrm d$. Here 
$\tilde\fg\oplus\bC K$ is the universal central extension of 
the loop algebra $\tilde\fg:=\fg[t,t^{-1}]$
and $[\mathrm d,t^i\otimes x+K]=it^i\otimes x$. The
automorphism $\sigma$ extends to an automorphism of $\hat\fg$ by
\[\sigma(t^i\otimes x)=\xi_e^{-i}t^i\otimes\sigma(x),\ \ \sigma(K)=K,\ \ \sigma(\mathrm d)=\mathrm d.\]
The fixed point subalgebra 
\[^\sigma\hat\fg:={^\sigma}\tilde\fg\oplus\bC K\oplus\bC\mathrm d,\] 
where $^\sigma\tilde\fg:=\fg[t,t^{-1}]^\sigma$, is the 
twisted affine Kac-Moody algebra associated to $(\fg,\sigma)$. 

Set $^\sigma\hat\ft:=\ft^\sigma\oplus\bC K\oplus\bC\mathrm d$ (here $\ft^\sigma$ is the $\sigma$-fixed vectors in $\ft$) and 
define $\delta\in(^\sigma\hat\ft)^*$ by $\delta|_{\ft^\sigma\oplus\bC K}=0$, $\delta(\mathrm d)=1$. Then there is an affine root spaces decomposition of $^\sigma\hat\fg$ with respect to $^\sigma\hat\ft$
\beq\label{KM}
^\sigma\hat\fg={^\sigma}\hat\ft\oplus\bigoplus_{\alpha\in\Phi_\aff}{^\sigma}\hat\fg_\alpha
\eeq
here $\Phi_\aff^\sigma\subset(\ft^{\sigma})^*\oplus\bC\delta$ is the set of 
affine roots.
We identify affine roots 
$\Phi_\aff^\sigma$ with affine functions on $\ft^\sigma$ by 
sending $\delta$ to the constant function $1$.

We now recall the construction of affine simple roots $\Delta_\aff^\sigma\subset
\Phi_\aff^\sigma$.
Let $R/\sigma$ (resp. $\Delta/\sigma$) be the set of orbits in $R$ (resp, $\Delta$) under $\sigma$. 
For any orbit $a\in R/\sigma$,
let $\beta_a$ denote the restriction to $\ft^\sigma$ of any $\alpha\in a$. Then the collection 
$R^\sigma:=\{\beta_a|a\in R/\sigma\}$
is a root system (possibly non-reduced) with 
basis $\Delta^\sigma=\{\beta_a|a\in\Delta/\sigma\}$. Let us choose a numbering
\[\Delta^\sigma=\{\beta_1,...,\beta_{\ell_\sigma}\},\]
where $\ell_\sigma$ is the number of $\sigma$-orbits on the set of simple roots $\Delta$.
We define a certain positive root $\eta$ as follows. 
If $\sigma=id$, then $\eta$ is the highest root. If $\sigma\neq id$, 
unless $(R,\sigma)$ is of type $^2A_{2n}$, $\eta$ is the highest short root of $R^\sigma$;
when $(R,\sigma)$ is of type $^2A_{2n}$, $\eta$ is twice the highest short root of 
$R^\sigma$.
There are unique positive integers $\{b_0=1,b_1,...,b_{\ell_\sigma}\}$ such that 
\beq\label{eta}
\eta=\sum_{i=1}^{\ell_\sigma} b_i\beta_i.
\eeq
The sum
$h_\sigma:=e\sum_{i=0}^{\ell_\sigma} b_i$
is the twisted Coxeter number of $(R,\sigma)$.
We define \[\beta_0=1/e-\eta.\]
Then $\Delta_\aff^\sigma:=\{\beta_0,...,\beta_{\ell_\sigma}\}$ is the set of affine simple roots
associated to $(\fg,\sigma)$. 
The set $C^\sigma=\{x\in V^\sigma|\beta_i(x)>0,\ i=0,...,\ell_\sigma\}$ is called the 
fundamental alcove. We denote by $\bar C^\sigma$ the closure of $C^\sigma$ in $V^\sigma$.
\quash{
Let $\fg=\bigoplus_{i\in\bZ/e\bZ}\fg_i$ be the 
$\bZ/e\bZ$-grading on $\fg$ defined by the automorphism $\sigma$.
We denote by $R^\sigma$ the 
roots of the simple Lie algebra 
$\fg^\sigma=\fg_0$ with respect to $\ft^\sigma$ and 
let $\Delta^\sigma=\{\beta_1,...,\beta_{\ell_\sigma}\}$
be the set of simple roots determined by $\fb^\sigma$. Here 
$\ell_\sigma$ is the number of $\sigma$-orbits on the set of simple roots.
We denote by $\Phi^\sigma_{\aff}$ the set of affine roots.
We identify affine roots with affine functions on $V^\sigma$. For any $\alpha\in\Phi_\aff^\sigma$ we denote 
by $\bar\alpha$ the linear part of $\alpha$. 
We denote by $\Phi^\sigma_{\aff}$ the set of affine roots.
We identify affine roots with affine functions on $V^\sigma$. For any $\alpha\in\Phi_\aff^\sigma$ we denote 
by $\bar\alpha$ the linear part of $\alpha$. 
}
\subsection{Normalized Kac coordinates}\label{Kac coordinates}
Let $\fg=\bigoplus_{i\in\bZ/m\bZ}\fg_{i}$ be a grading of $\fg$. The grading on $\fg$ 
corresponds to a torsion automorphism $\theta=\theta'\rtimes\sigma\in\on{Aut}(\fg)$ such that 
$\theta(v)=\xi_m^iv$ for $v\in\fg_i$. 
The automorphism $\theta$ is $G$-conjugate to one of the form $t\rtimes\sigma$ with $t\in T^\sigma$.
Thus without loss of generality, we can assume 
$\theta=t\rtimes\sigma$, $t\in T^\sigma$.

According to \cite[Proposition 8.1]{Kac} (see also \cite[\S3]{OV}), there exists $x\in\bar C^\sigma$
such that $\theta=\exp(x)\rtimes\sigma$.
Since $\theta$ has order $m$, we have 
\beq\label{kac coor}
\beta_i(x)=\frac{s_i}{m}.
\eeq
The integers $(s_i)_{i=0,...,\ell_\sigma}$ are the normalized \emph{Kac coordinates} of $\theta$ (see \cite[\S2.2]{RLYG}).
These coordinates satisfy 
\beq\label{equality}
e\sum_{i=0}^{\ell_\sigma}b_is_i=m,
\eeq
here $e$ is the order of $\sigma$ and the $b_i$ are integers mentioned earlier in 
\S\ref{affine simple roots}.

Let $\check\lambda=mx\in\check\bX^\sigma$.
The action of $\bG_m$ on $\fg$ via $\check\lambda$ give a grading $\fg=\bigoplus_{k\in\bZ}\fg(k)$ and 
each $\fg_{i}$ decomposes as 
\begin{equation}\label{decomp}
\fg_{ i}=\bigoplus_{k\in\bZ}\fg_{i}(k).
\end{equation} 
\begin{lemma}
In above decomposition,
we have 1)
$\fg_{i}(k)=0$ unless $k\equiv i\ \on{mod}\frac{m}{e}$ 
and 2)
$-m+es_0\leq k\leq m-es_0$.

\end{lemma}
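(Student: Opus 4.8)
The plan is to compare the two gradings on $\fg$ directly, keeping careful track of how $\sigma$ interacts with the torus actions. First I would identify $\exp(x)$ with $\check\lambda(\xi_m)$, the value at $\xi_m$ of the cocharacter $\check\lambda=mx$: for every $\alpha\in\bX$ one has $\alpha(\exp(x))=e^{2\pi i\alpha(x)}=e^{2\pi i\langle\alpha,\check\lambda\rangle/m}=\alpha(\check\lambda(\xi_m))$. Since $\check\lambda\in\check\bX^\sigma$, the $\bG_m$-action on $\fg$ through $\check\lambda$ commutes with $\sigma$ and with $\Ad(\exp(x))$, hence with $\theta=\exp(x)\rtimes\sigma$; in particular each weight space $\fg(k)$ is $\theta$- and $\sigma$-stable, and I would decompose it further by $\sigma$-eigenvalue as $\fg(k)=\bigoplus_{j=0}^{e-1}\fg(k)_{(j)}$, where $\fg(k)_{(j)}$ is the $\xi_e^{\,j}$-eigenspace of $\sigma$ in $\fg(k)$. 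I also record that $\check\lambda$ is dominant: if $\alpha$ is a simple root of $\fg$ in the $j$-th $\sigma$-orbit then $\langle\alpha,\check\lambda\rangle=\langle\beta_j,\check\lambda\rangle=\beta_j(mx)=s_j\ge 0$ by \eqref{kac coor} and $x\in\bar C^\sigma$, and the same computation shows $\langle\beta_i,\check\lambda\rangle=s_i\ge 0$, so $\check\lambda$ is $R^\sigma$-dominant as well. Finally $e\mid m$ by \eqref{equality}, so $m/e\in\bZ$.

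For part 1: on $\fg(k)_{(j)}$ the operator $\Ad(\check\lambda(\xi_m))$ acts by $\xi_m^{\,k}$ and $\sigma$ acts by $\xi_e^{\,j}=\xi_m^{\,jm/e}$; since the two commute, $\theta$ acts by $\xi_m^{\,k+jm/e}$. Hence $\fg(k)_{(j)}\subseteq\fg_i$ precisely when $i\equiv k+jm/e\pmod m$, so
\[
\fg_i(k)=\fg_i\cap\fg(k)=\bigoplus_j\fg(k)_{(j)},
\]
the sum over those $j\in\{0,\dots,e-1\}$ with $k+jm/e\equiv i\pmod m$. If this is non-zero there is such a $j$, and then $k\equiv i-jm/e\equiv i\pmod{m/e}$, which is assertion 1.

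For part 2: if $\fg(k)\ne 0$ then $k=0$ or $k=\langle\alpha,\check\lambda\rangle$ for some $\alpha\in R$. Writing $\bar\alpha=\alpha|_{\ft^\sigma}\in R^\sigma$ and letting $\vartheta$ denote the highest root of $R^\sigma$, both $\vartheta-\bar\alpha$ and $\vartheta+\bar\alpha$ are non-negative combinations of the simple roots $\beta_1,\dots,\beta_{\ell_\sigma}$, so $R^\sigma$-dominance of $\check\lambda$ yields $|k|=|\langle\bar\alpha,\check\lambda\rangle|\le\langle\vartheta,\check\lambda\rangle$. On the other hand, \eqref{eta} and \eqref{equality} give $\langle\eta,\check\lambda\rangle=\sum_{i\ge 1}b_is_i=(m-es_0)/e$, and in particular $es_0=m-e\langle\eta,\check\lambda\rangle\le m$, so $m-es_0\ge 0$. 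Hence the bound $|k|\le m-es_0$ will follow once we know
\[
\vartheta\preceq e\eta,
\]
i.e. that $e\eta-\vartheta$ is a non-negative integral combination of $\beta_1,\dots,\beta_{\ell_\sigma}$: then $\langle\vartheta,\check\lambda\rangle\le e\langle\eta,\check\lambda\rangle=m-es_0$ by dominance, and the lower bound $k\ge -m+es_0$ is the same statement applied to $-\alpha$.

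The only genuinely non-formal input is $\vartheta\preceq e\eta$. When $\sigma=\id$ it is the equality $\vartheta=\eta$ with $e=1$; when $(R,\sigma)$ has type $^2A_{2n}$ one again has $\vartheta=\eta$ while $e=2$, so it is immediate. In the remaining cases $R^\sigma$ is one of $B_n,C_n,F_4,G_2$, the root $\eta$ is the highest short root, and $e$ equals $2$ (resp.\ $3$ for $G_2$); here I would check $\vartheta\preceq e\eta$ directly from the tables of highest and highest short roots — equivalently, one deduces it from the fact that $\check\vartheta$ is dominated by the highest coroot $\check\eta$ together with the coincidence that the lacing number of $R^\sigma$ equals $e$. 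This short case analysis is the step I expect to cost the most effort; everything else is bookkeeping with the two commuting torus actions on $\fg$.
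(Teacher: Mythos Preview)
Your proof is correct and follows essentially the same route as the paper. For part 1 the paper argues identically (computing $\theta(v)$ on a weight vector and comparing eigenvalues), and for part 2 the paper likewise reduces everything to the single inequality $e\eta-\beta\in\sum_{i\ge 1}\bZ_{\ge 0}\beta_i$ (with $\beta$ the highest root of $\fg$ restricted to $\ft^\sigma$, which plays the same role as your $\vartheta$) and then appeals to a case check against the tables in \cite{RLYG}; your coroot/lacing-number remark is a pleasant gloss but the paper simply cites the tables.
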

\begin{proof}
Let $0\neq v\in\fg_i(k)$. Then we have 
$\theta(v)=\xi_m^iv$. On the other hand,
since $\theta=\check\lambda(\xi_m)\rtimes\sigma$, we have 
$\theta(v)=\sigma(\xi_m^kv)=\xi_e^l\xi_m^kv=(\xi_m)^{\frac{ml}{e}+k}v$ for some $l\in\bZ$. This implies $k\equiv i\ \on{mod}\frac{m}{e}$.
For part 2), it is enough to show that 
$\beta(\check\lambda)\leq m-es_0$, where $\beta$ is the highest root 
of $\fg$ (here we regard $\beta$ as an element in $(\ft^\sigma)^*$). 
For this, observe that we have 
$e\eta-\beta\in\sum_{i=1}^{\ell_\sigma}\bZ_{\geq 0}\beta_i$\footnote{This is obvious when $\sigma=id$,
for $\sigma\neq id$ one can use the table 1 in \cite{RLYG} to check it.}, here $\eta$ is the root introduced in 
(\ref{eta}). Hence 
\[\beta(\check\lambda)\leq e\eta(\check\lambda)=
e\sum_{i=1}^{\ell_\sigma}b_i\beta_i(\check\lambda)\stackrel{(\ref{kac coor})}=
e\sum_{i=1}^{\ell_\sigma}b_is_i\stackrel{(\ref{equality})}=m-es_0.\]
We are done.
\quash{
$\langle\check\lambda,\beta\rangle\leq m-es_0$, where $\beta$ is the highest root 
of $\fg$. For this, observe that we have 
$e\eta\geq\beta$, here $\eta$ is the root introduced in 
(\ref{eta}). Hence 
\[\langle\check\lambda,\beta\rangle\leq\langle\check\lambda,e\eta\rangle=
e\langle\check\lambda,\sum_{i=1}^{\ell_\sigma}b_i\beta_i\rangle\stackrel{(\ref{kac coor})}=
e\sum_{i=1}^{\ell_\sigma}b_is_i\stackrel{(\ref{equality})}=m-es_0.\]}

\end{proof}
\subsection{The $\theta$-group}
Let $G_0$ be the reductive subgroup of $G$ with Lie algebra $\fg_0$.
There are natural actions of $G_0\rtimes\sigma$ on $\fg_{i}$.
The pair $(G_0,\fg_1)$ is called 
$\theta$-group in the terminology of the Vinberg school.

A grading $\fg=\bigoplus_{i\in\bZ/m\bZ}\fg_i$ (resp. a torsion automorphism 
$\theta\in\Aut(\fg)$, resp. a $\theta$-group $(G_0,\fg_1)$) is called \emph{regular}, if $\fg_1$ contains a regular semi-simple element;
\emph{stable} if $\fg_1$ contains a stable element (recall that an element $v$ is called stable if $G_0$-orbit of $v$ is closed and the 
stabilizer in $G_0$ is finite).  
According to \cite[\S 5.3]{RLYG}, a vector $v\in\fg_{1}$ is stable if only if $v$ is a regular semi-simple elements of $\fg$ and the action of $\theta$ on the Cartan sub-algebra centralizing $v$ is \emph{elliptic}, i.e.
$Z_{\fg_0}(v)=0$. We denote by $\fg_1^{r}$ (resp. $\fg_1^s$) the open set of regular semi-simple
(resp. stable) elements.

For future reference, we include a lemma about structure of $\fg_0$:
\begin{lemma}[]\label{positive}
\
\begin{enumerate}
\item $\fg_0$ is the reductive subalgebra with Cartan subalgebra $\ft^\sigma$
and the system of simple roots $\Delta^\sigma_0=\{\bar\beta_i|\beta_i\in\Delta_\aff^\sigma, s_i=0\}$.
Here $\bar\beta_i$ denotes the linear part of $\beta_i$.
\item If $s_0\neq 0$, we have $\fg_0=\fg_0\cap\fg(0)=\fg(0)^\sigma$.
\end{enumerate}
\end{lemma}
\begin{proof}
Part (1) is proved in \cite[Proposition 8.6]{Kac} (see also \cite[\S3.11]{OV}).
For 
part (2) we first note that $\fg_0\supset\fg_0\cap\fg(0)=\fg(0)^\sigma$. Since 
$\fg(0)^\sigma$ is the Levi subalgebra of $\fg^\sigma$ 
with system of simple roots $\{\bar\beta_i|\beta_i\in\Delta_\aff^\sigma, i\neq 0,s_i=0\}$, part (1) implies 
$\dim\fg_0=\dim\fg(0)^\sigma$. The claim follows.

\end{proof}
\section{Kac gradings on loop algebras}\label{KMP}
\subsection{}
Let $^\sigma\tilde\fg=\fg[t,t^{-1}]^\sigma$ be the twisted loop algebra. 
The decomposition in (\ref{KM}) induces a roots space decomposition of the twisted loop algebra $^\sigma\tilde\fg=\bigoplus_{\alpha\in\Phi_\aff^{\sigma}\cup\{0\}}{^\sigma\tilde\fg}_\alpha$ (here we set $^\sigma\tilde\fg_0=\ft^\sigma$).
For any $x\in V^\sigma$, Kac has introduced 
a $\bZ$-grading 
\begin{equation}\label{Z grading}
^\sigma\tilde\fg=\bigoplus_{i\in\bZ}{^\sigma\tilde\fg_{x,i}}.
\end{equation}
Explicitly, we have 
\[^\sigma\tilde\fg_{x,i}=\bigoplus_{\alpha\in\Phi_\aff^\sigma\cup\{0\},\ \alpha(x)=\frac{i}{m}}{^\sigma\tilde\fg_\alpha}.\]
We called the $\bZ$-grading in (\ref{Z grading}) the \emph{Kac grading} associated to $x$.

\begin{example}[Principal grading]\label{principal grading}
Consider the case $\theta=\exp(x)$ where $x=\frac{\check\rho}{h}$, $\check\rho(\alpha_i)=1$ for $i=1,...,\ell$.
In this case the corresponding Kac grading can be described as follows.
Let $E_i$, $H_i$, $F_i$, $i=1,...,\ell$ be a Chevalley basis of $\fg$
such that $E_i\in\frak n$, $H_i\in\ft$, $F_i\in\bar{\frak n}$. Then $\tilde\fg=\fg[t,t^{-1}]$ has
Kac-Moody generators $e_i=E_i\otimes 1$, $h_i=H_i\otimes 1$, $f_i=F_i\otimes 1$, $i=1,...,\ell$;
$e_0=F_{-\beta}\otimes t^{}$,
$f_0=E_{\beta}\otimes t^{-1}$, here $\beta$ is the highest root and 
$E_{\beta}$ (resp. $F_{-\beta}$) is a generator of the root space $\fg_{\beta}$ (resp. $\fg_{-\beta}$).
Now the grading on $\tilde\fg$ is given by $\on{deg}(e_i)=-\on{deg}(f_i)=1$, $\on{deg}(h_i)=0$.
We call this grading the \emph{principal grading} (see \cite[\S 14]{Kac}).
\end{example}

\begin{remark}
In \cite[\S 1.3]{Kac}, Kac called the $\bZ$-grading in (\ref{Z grading}) the gradation of type $(s_0,...,s_{\ell_\sigma})$.
\end{remark}

\subsection{}\label{Kac}
We preserve the setup in \S\ref{Kac coordinates}.
Let $\theta=\exp(x)\rtimes\sigma\in\Aut(\fg)$, $x\in\bar C^\sigma$ and 
let $\fg=\bigoplus_{i\in\bZ/m\bZ}\fg_i$ be the grading on $\fg$ defined by the automorphism $\theta$.
Let $u=t^{\frac{e}{m}}$, and 
consider the following 
$\bZ$-graded Lie algebra 
\[\fg(\theta,m):=\bigoplus_{i\in\bZ} u^{i}\fg_{i}\subset\fg[u^{},u^{-1}],\]
with $\fg(\theta,m)_i:=u^{i}\fg_{i}$.
In his book, Kac proved the following.
\begin{theorem}[See \cite{Kac}, Theorem 8.5]\label{auto}
\
\begin{enumerate}
\item 
Let $\check\lambda=mx\in\check\bX^\sigma$.
The automorphism $\Ad(\lambda(u^{-1})):\fg[u,u^{-1}]\stackrel{}\is\fg[u,u^{-1}]$ induces an isomorphism
\[\Phi:\fg(\theta,m)\is\Lg.\]

\item Under the isomorphism $\Phi$, the $\bZ$-grading of $\fg(\theta,m)$
becomes to the Kac-Moy-Prasad grading of $\Lg$ associated to $x$.

\item Under the isomorphism $\Phi$, the derivation $u\partial_u$ on $\fg(\theta,m)$ becomes the 
derivation $D=\frac{m}{e}t\partial_t+\ad\check\lambda$ on $\Lg$. 
\item
The invariant form 
 $\langle u^ix,u^jy\rangle=\delta_{i,-j}(x,y)_{Kill}$ on $\fg[u,u^{-1}]$
induced an invariant from $\langle,\rangle_\theta$ (resp. $\langle,\rangle_\sigma$) on $\fg(\theta,m)$ (resp. $\Lg$) which is 
compatible with the grading, i.e., we have $\langle v,w\rangle_\theta=0$ (resp. $\langle v,w\rangle_\sigma$=0)
for $v\in\fg(\theta,m)_i$, $w\in\fg(\theta,m)_j$, $i+j\neq 0$ (resp $v\in\Lg_{x,i}$, $w\in\Lg_{x,j}$, $i+j\neq 0$).

\end{enumerate} 
\end{theorem}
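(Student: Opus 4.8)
The plan is to follow Kac's book directly, so the proof is really a matter of unwinding the definitions and checking compatibilities; the substance lies in part (1), and (2)--(4) are bookkeeping consequences of the explicit formula for $\Phi$. First I would set up the ground ring carefully: since $u=t^{e/m}$ we are working inside $\fg[u,u^{-1}]$, and $\fg[t,t^{-1}]\subset\fg[u,u^{-1}]$ is the subalgebra of elements whose $u$-exponents lie in $\frac{m}{e}\bZ$. The twisted loop algebra $\Lg=\fg[t,t^{-1}]^\sigma$ is then the subspace spanned by $t^{j}\otimes x$ with $x\in\fg$ satisfying $\sigma(x)=\xi_e^{j}x$, equivalently the $u$-exponents congruent to the $\sigma$-eigenvalue data modulo $\frac{m}{e}$. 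So there is a clean description of $\Lg$ purely in terms of $u$-powers and $\sigma$-eigenspaces, and likewise $\fg(\theta,m)=\bigoplus_i u^i\fg_i$ sits inside $\fg[u,u^{-1}]$. The claim of (1) is that conjugating by $\Ad(\lambda(u^{-1}))$ carries one subalgebra onto the other.

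For part (1), I would compute the effect of $\Ad(\lambda(u^{-1}))$ on a root vector. Write $\check\lambda=mx\in\check\bX^\sigma$, and let $v\in\fg_i$ be homogeneous for the $\check\lambda$-action, say $v\in\fg_i(k)$ in the notation of (\ref{decomp}), so $\ad(\check\lambda)v=kv$. Then $\Ad(\lambda(u^{-1}))(u^i v)=u^i\cdot u^{-k}v=u^{i-k}v$. By the Lemma in \S\ref{Kac coordinates}, $k\equiv i\bmod \frac{m}{e}$, so $i-k\in\frac{m}{e}\bZ$ and $u^{i-k}=t^{(i-k)e/m}$ is a genuine power of $t$; moreover one checks $\sigma(v)=\xi_e^{(i-k)e/m}v$ using $\theta=\exp(x)\rtimes\sigma$ and $\theta v=\xi_m^i v$ together with $\exp(x)v=\xi_m^k v$. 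Hence $t^{(i-k)e/m}\otimes v\in\Lg$. This shows $\Phi(\fg(\theta,m))\subseteq\Lg$; that it is a Lie algebra isomorphism is automatic since $\Ad(\lambda(u^{-1}))$ is an automorphism of $\fg[u,u^{-1}]$, and surjectivity follows by running the same computation backwards (every root vector of $\Lg$ is hit, and these span). Thus $\Phi$ is the desired isomorphism.

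Part (2) is then immediate from the formula $\Phi(u^i v)=t^{(i-k)e/m}\otimes v$ for $v\in\fg_i(k)$: the affine root $\alpha$ whose root space contains $t^{(i-k)e/m}\otimes v$ satisfies $\alpha(x)=\frac{i}{m}$ by a direct computation (the finite part contributes $\frac{k}{m}$ via $\beta(\check\lambda)=\beta(mx)$ restricted appropriately, and the $\delta$-part contributes $\frac{i-k}{m}$), so the $u$-degree $i$ matches the Kac--Moy--Prasad degree index. For part (3), one computes $u\partial_u$ on $u^i v$ as multiplication by $i$, and transports it through $\Phi$: writing $u=t^{e/m}$ gives $u\partial_u=\frac{m}{e}t\partial_t$ on the $t$-variable, while the conjugation by $\Ad(\lambda(u^{-1}))$ produces the extra inner term $\ad\check\lambda$ (this is the standard identity $g\circ(u\partial_u)\circ g^{-1}=u\partial_u+(u\partial_u g)g^{-1}$ applied to $g=\lambda(u^{-1})$, for which $(u\partial_u g)g^{-1}=-\ad\check\lambda$, but note the sign works out to $+\ad\check\lambda$ because we conjugate in the opposite direction); so $u\partial_u\mapsto\frac{m}{e}t\partial_t+\ad\check\lambda=D$. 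Part (4) follows because $\Ad$ of any element of $G[u,u^{-1}]$ preserves the Killing-form pairing $\langle u^ix,u^jy\rangle=\delta_{i,-j}(x,y)_{Kill}$, and this pairing is manifestly compatible with the $u$-grading; restricting to the two subalgebras and transporting gives $\langle,\rangle_\theta$ and $\langle,\rangle_\sigma$ with the stated orthogonality of non-opposite graded pieces.

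The main obstacle is the eigenvalue/congruence bookkeeping in part (1): one must be scrupulous that the $u$-exponent $i-k$ produced by the conjugation really lands in $\frac{m}{e}\bZ$ \emph{and} that the resulting element of $\fg[t,t^{-1}]$ is $\sigma$-fixed in the twisted sense, i.e. the $\sigma$-eigenvalue of the coefficient vector is exactly $\xi_e$ to the matching power. Both facts rest on the Lemma of \S\ref{Kac coordinates} (part 1) relating $k$ and $i$ modulo $\frac{m}{e}$, so the honest content of the argument is already isolated in that lemma; everything downstream is formal.
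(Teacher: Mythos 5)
Your argument is correct, and it is essentially the standard one: the paper does not prove this theorem itself but cites it from Kac's book (Theorem 8.5), and your computation $\Phi(u^i v)=u^{i-k}v=t^{(i-k)e/m}\otimes v$ for $v\in\fg_i(k)$, together with the congruence $k\equiv i \bmod \frac{m}{e}$ and the scalar action $\sigma|_{\fg_i(k)}=\xi_m^{i-k}$, is exactly the content of Kac's proof, with (2)--(4) following by transport as you say. The only cosmetic weakness is the sign discussion in part (3); it is cleaner to just check $D(t^{(i-k)e/m}\otimes v)=((i-k)+k)\,t^{(i-k)e/m}\otimes v=i\cdot t^{(i-k)e/m}\otimes v$ directly on homogeneous elements, which you have all the ingredients for.
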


We have the following corollary:
\begin{corollary}[\cite{Kac}, \cite{RY}]\label{positive 1}
\
\begin{enumerate}
\item For each $i=0,1,..,{m-1}$, 
there is a canonical isomorphism
\[{^\sigma\tilde\fg_{x,i}}=\bigoplus_{k} t^{\frac{e(i-k)}{m}}\fg_i(k)\is\fg_i.\]
where the sum is over $-m+es_0\leq k\leq m-es_0,\ k\equiv i\on{mod}\frac{m}{e}$.
\item If $i>0$, all powers $t^{\frac{e(i-k)}{m}}$ appearing in the above sum are positive, i.e., for $0<i<m$, $\fg_i(k)=0$ unless $-m+es_0\leq k\leq i,\ k\equiv i\on{mod}\frac{m}{e}$.
\end{enumerate}
\end{corollary}
\begin{proof}
Since $\fg(\theta,m)_i=u^i\fg_i$ for $i=0,1,..,{m-1}$,
result in \S\ref{setup} and above Theorem implies \[{^\sigma\tilde\fg_{x,i}}=\Phi(u^i\fg_i)=
\Ad(\lambda(u^{-1}))(u^i\fg_i)=
\bigoplus_{k} t^{\frac{e(i-k)}{m}}\fg_i(k),\]
here the sum is over $-m+es_0\leq k\leq m-es_0,\ k\equiv i\on{mod}\frac{m}{e}$.
Part (1) follows.
Since $x$ is in the fundamental alcove $\bar C^\sigma$, direct calculation 
shows that, for $i>0$, $\Lg_{x,i}$ is contained in $\Lg\cap\fg[t]$. Part (2) follows.

\end{proof}
Let us assume $\theta$ is regular and let $X\in\fg_{1}^{r}$ be a regular semi-simple element.
Consider \[p_1=\Phi(uX)\in\Lg_{x,1}.\] 
We have the following generalization of \cite[Proposition 3.8]{Kac1}
\begin{prop}\label{non-degenerate}
Let $\fa=\on{Ker}(\ad p_1)$ and $\fc=\on{Im}(\ad p_1)$. We have 
\begin{enumerate}
\item The twisted loop algebra $\Lg$ has an orthogonal decomposition
$\Lg=\fa\oplus\fc$
with respect to the invariant form $\langle,\rangle_\sigma$
in Theorem \ref{auto}. 
\item The Lie subalgebra $\fa$ is commutative.
With respect to the Kac grading, $\fa=\bigoplus\fa_i$,  
the subspaces $\fa_i$ and $\fa_j$ are orthogonal (\emph{resp.} non degenerately paired)
with respect to the invariant form $\langle,\rangle_\sigma$ on $\Lg$ if $i+j\neq 0$ (\emph{resp.}\  $i+j=0$). 
\item Consider the Kac-Moody central extension 
$^\sigma\tilde\fg\oplus\bC K$ of $^\sigma\tilde\fg$ (cf. \S\ref{affine simple roots}).
The pre-image $\hat\fa=\fa\oplus\bC K$ of $\fa$ in $^\sigma\tilde\fg\oplus\bC K$ is a non-split 
central extension of $\fa$.
\end{enumerate}
\end{prop}
\begin{proof}
We first prove part (1) and (2). 
Since the isomorphism $\Phi:\fg(\theta,m)\is\Lg$ is compatible with the invariant forms on both side, 
it is enough to prove the 
corresponding statement for 
$\fg(\theta,m)$. 
Let $\fs=\bigoplus_{i\in\bZ/m}\fs_{i}$ be the centralizer of $X$ in 
$\fg$ and $\fb=\bigoplus_{i\in\bZ/m}\fb_{i}$ be its orthogonal complement with respect to the Killing form $(,)_{Kill}$. Consider $\fa'=\on{Ker}(\ad(uX))$ and its orthogonal complement $\fc'$ in $\fg(\theta,m)$
with respect to the invariant form $\langle,\rangle_\theta$. 
We have \[\fa'=\bigoplus_{i\in\bZ}\fa_i',\ \ \ \fc'=\bigoplus_{i\in\bZ}\fc_i',\]
where $\fa_i'=u^i\fs_{i}$, $\fc'_i=u^i\mathfrak b_{i}$.
Now since the restriction of $(,)_{Kill}$ to any Cartan subalgebra is non-degenerate and 
$\ad(X)$ in invertible on $\fb$, we have 
i) $\fb_{i+1}=[X,\fb_{i}]$ and ii) $\fs_{i}$ and $\fs_{j}$ 
are orthogonal (resp. non degenerately paired)
with respect to $(,)_{Kill}$ if $i+j\neq 0$ (resp. \  $i+j=0$). 
Part (1) and (2) follow.

For part (3), we first notice that a cocycle corresponding to the central extension 
is given by
$^\sigma\tilde\fg\times{^\sigma}\tilde\fg\ra\bC$, $(v,w)\ra$
$\langle t\partial_t(v),w\rangle_\sigma$. Thus
it is enough to show that 
for any $z\neq 0\in\fa_n$ there exists $z'\in\fa_{-n}$ such that
$\langle t\partial_t(z),z'\rangle_\sigma\neq 0$. Now observe that 
\[\langle t\partial_t(z),z'\rangle_\sigma=\langle\frac{e}{m}(D-\ad\check\lambda)(z),z'\rangle_\sigma=\langle\frac{ne}{m}z,z'\rangle_\sigma-\langle\frac{e}{m}[\check\lambda,z],z'\rangle_\sigma
,\]
here $D$ is the derivation of $\Lg$ in Theorem \ref{auto}.
Since $\fa$ is commutative by part (2) we have $\langle\frac{e}{m}[\check\lambda,z],z'\rangle_\sigma=
\langle\frac{e}{m}\check\lambda,[z,z']\rangle_\sigma=0$, hence
$\langle t\partial_t(z),z'\rangle_\sigma=\langle\frac{ne}{m}z,z'\rangle_\sigma$ and the desired claim follows 
again from part (2).
\end{proof}

\section{Yun's $\theta$-connections}\label{Yun's connection}
In his unpublished work, Z. Yun associated to each torsion automorphism $\theta\in\Aut(\fg)$ and a nonzero vector 
$X\in\fg_{1}$, a \emph{twisted}
flat $G$-connection $\nabla^X$ on the trivial $G$-bundle on $\bG_m=\mathbb P^1-\{0,\infty\}$, called the $\theta$-connection associated to 
$X$. 
In this section we shall recall his construction of
$\nabla^X$ and compute its
residue at $0$ and
slope and irregularity at $\infty$.
\subsection{Twisted flat $G$-connection and cohomological rigidity}\label{theta}
In this subsection we recall the definition of twisted flat $G$-connection 
and cohomological rigidity (see \cite{Yun1} in the setting of $\ell$-adic sheaves).

Let $C$ be a smooth curve and let $\mF$ be a $G$-bundle on $C$. Denote by $\pi_\mF:\mF\ra C$ the natural projection.
A $G$-connection $\nabla$ on $\mF$ is a $G$-equivaraint map 
$\nabla:\Omega_\mF^1\ra\pi_\mF^*\Omega_C^1$ such that the composition 
$\pi_\mF^*\Omega_C^1\stackrel{d_{\pi_\mF}}\ra\Omega_\mF^1\stackrel{\nabla}\ra\pi_\mF^*\Omega_C^1$
is equal to the identity map.
Now let $\widetilde C\ra C$ be a finite \'etale Galois cover 
with Galois group $\Gamma$. Let $\sigma:\Gamma\ra\Aut(G)$ be a homomorphism.
We define a \emph{$\sigma$-twisted} flat $G$-connection on $C$ to be a triple 
$(\mathcal F,\nabla,\delta)$ where $\mathcal F$ is $G$-bundle on $\widetilde C$, $\nabla$ is a flat $G$-connection on it,
and $\delta$ is a collection of isomorphisms $\delta_\gamma:(\mathcal F,\nabla)\is\gamma^*(\mathcal F,\nabla)$, $\gamma\in\Gamma$ satisfying the usual cocycle relations with respect to the multiplication
on $\Gamma$. 

When $\mathcal F$ is the trivial $G$-bundle, then a $\sigma$-twisted flat connection on it
may be described as an operator 
\[\nabla=d+A(t)dt,\]
where $d$ is the exterior derivative and
$A(t)dt$ is a $\Gamma$-invariant $\fg$-valued one-form on $\widetilde C$, 
with $\Gamma$ acting by deck transformation and by 
the map $\Gamma\stackrel{\sigma}\ra\Aut(G)\ra\Aut(\fg)$ on $\fg$.

Let $(\mathcal F,\nabla,\delta)$ be a $\sigma$-twisted flat $G$-connection 
on a smooth curve $C$. Then the corresponding flat vector bundle $\nabla^\Ad$
on $\widetilde C$ associated to the adjoint representation 
descends to $C$ by $\Gamma$-equivariance.
Let $\bar\nabla^\Ad$ be the flat vector bundle on $C$ after 
descent.

\begin{definition}\label{coh-rigid}
A $\sigma$-twisted flat $G$-connection $(\mathcal F,\nabla,\delta)$ over an open subset $C$
of a complete smooth curve $\bar C$ is called \emph{cohomologically rigid} if 
\[H^*(\bar C,j_{!*}\bar\nabla^\Ad)=0,\]
where $j:C\hookrightarrow\bar C$ is the inclusion, and 
$j_{!*}\bar\nabla^\Ad$ is the \emph{non-derived} push forward
of the $D$-module
$\bar\nabla^\Ad$ along $j$.\footnote{
We use the notation $j_{!*}$ because 
in this case $j_{!*}\bar\nabla^\Ad$ is also the 
intermediate extension of the 
$D$-module
$\bar\nabla^\Ad$ to $\bar C$ (see \cite[\S 5.2.2]{BBD}).}
\end{definition}
\begin{remark}
Our definition of cohomological rigidity here is stronger than the usual definition. Usually one only 
requires $H^1(\bar C,j_{!*}\bar\nabla^\Ad)=0$.
\end{remark}
\begin{remark}
For a general connected reductive group $G$, one should modify the definition above by 
replacing $\bar\nabla^\Ad$ by $\bar\nabla^{\Ad,\on{der}}$ where 
$\bar\nabla^{\Ad,\on{der}}$ is the flat vector bundle associated to the representation 
$\fg^{\on{der}}=\Lie\ G^{\on{der}}$. 
For example, consider the case 
$\sigma$ is trivial, $G=GL_n$
and $C\subset\bar C=\mathbb P^1$. Then for
an irreducible $GL_n$-connection $(\mF,\nabla)$
we have $\nabla^{\Ad,\on{der}}=\End^0(\mE)$, the flat vector bundle of traceless 
endomorphisms of $\mE:=\mF\times^{GL_n}\bC^n$.
The connection $\nabla$ is 
cohomologically rigid if and only if $H^1(\mathbb P^1,j_{!*}\nabla^\Ad)=0$, 
$H^0(\mathbb P^1,j_{!*}\nabla^\Ad)=H^2(\mathbb P^1,j_{!*}\nabla^\Ad)=\bC$, which is 
equivalent to the condition that the Euler characteristic $\chi(\mathbb P^1,j_{!*}\nabla^\Ad)=2$.
In particular, we see that our definition is compatible with the one in \cite[\S 5]{Katz1}.
\end{remark}

\subsection{Construction of $\nabla^X$}\label{construction}
We preserve the setup in \S\ref{setup}.
Let $\theta=\exp(x)\rtimes\sigma\in\Aut(\fg)=G\rtimes\Aut(R,\Delta)$ be a torsion automorphism of $\fg$.
Let $\fg=\bigoplus_{i\in\bZ/m}\fg_i$ be the corresponding grading.
Let $X\in\fg_1$ and let us write $X=\sum_{} X_k$, $X_k\in\fg_{1}(k)$
according to (\ref{decomp}). 
By Corollary \ref{positive 1},
we have $X_k=0$ unless $-m+es_0\leq k\leq 1$ 
and $k\equiv1\on{mod}\frac{m}{e}$. 
Define \[p_1=\Phi(uX)=\sum_{} X_kt^{\frac{e(1-k)}{m}}\in\Lg,\]
here $\Phi$ is the isomorphism in Theorem \ref{auto}. 
Then the $\theta$-connection associated to $X$ is the following flat $G$-connection on the trivial $G$-bundle on $\bG_m=\Spec\bC[t,t^{-1}]$
\begin{equation}\label{connection}
\nabla^X=d+p_1\frac{dt}{t}=d+\sum_{-m+es_0\leq k\leq1,\ k\equiv1\on{mod}\frac{m}{e}} X_kt^{\frac{e(1-k)}{m}}\frac{dt}{t}.
\end{equation}
Note that $\frac{e(i-k)}{m}\in\bZ$. 

The $\fg$-valued one form $p_1\frac{dt}{t}$ 
is $\sigma$-invariant, 
where $\sigma$ acts on $\bG_m$ by the formula
$t\ra \xi_e^{-1}t$ and by the pinned automorphism on $\fg$.
Therefore, by the discussion in \S\ref{theta}, we can regard $\nabla^X$ as
a \emph{$\sigma$-twisted} flat $G$-connection on the trivial $G$-bundle
on $\bG_m$, where we regard $\sigma$ 
as a map $\sigma:\mu_e=\langle\xi_e\rangle\ra\Aut(G)$ sending $\xi_e\ra \sigma$,
and $\Gamma=\mu_e$ is the Galois group of 
the finite \'{e}tale Galois cover  
$[e]:\widetilde\bG_m=\bG_m\ra\bG_m$ 
given by the $e$-th power map.

\subsection{Residue at $0$}\label{Res at 0}
Notice that $\frac{e(1-k)}{m}>0$ for $k<1$, thus 
the connection $\nabla^X$  has regular singularity at $0$ with residue $\Res(\nabla^X)=X_1\in\fg_1(1)^\sigma$.
Since $\fg(1)^\sigma$ consists of nilpotent elements of $\fg^\sigma$, the residue 
is nilpotent.

Moreover, since there are only finitely many $G^\sigma$ orbits on $\fg(1)^\sigma$ (see \cite{V}),
there is a 
dense open subset of $\fg(1)^\sigma$ which lies in a single nilpotent $G^\sigma$-orbit of $\fg^\sigma$. 
We denote this orbit by $\mO_\theta$.
Thus, for generic $X\in\fg_1$ the residue $\Res(\nabla^X)$
lies in $\mO_\theta$.

The assignment $\theta\ra\mO_\theta$ gives a well defined map
\footnote{This map and the map in (\ref{Yun's map}) are due to Z. Yun.}
\[
\{\emph{torsion automorphism of $\fg$ whose image in $\Aut(R,\Delta)$ is $\sigma$}\}
\ra\{\emph{nilpotent orbits in $\fg^\sigma$}\}.\]

We now assume $\theta$ is stable. Consider the normalized Kac 
coordinates $\{s_0,s_1,...,s_{\ell_\sigma}\}$ of $\theta$. 
If we omit $s_0$ and double the remaining Kac coordinates we 
obtain the weighted Dynkin diagram for the nilpotent orbit $\mO_\theta$.
Thus for $Y$ in $\mO_\theta$, we have $\dim\fg^{\sigma,Y}=\dim\fg(0)^\sigma$.
The nilpotent class $\mO_\theta$ is distinguished if and only if 
$\dim\fg(0)^\sigma=\dim\fg(1)^\sigma$.

Recall that stable torsion automorphisms $\theta$ are classified by regular elliptic $W$-conjugacy
classes in the coset $W\sigma$ (see \cite[Corollary 15]{RLYG}). We therefore get a map
\begin{equation}\label{Yun's map}
\{\emph{regular elliptic classes in $W\sigma$}\}
\ra\{\emph{nilpotent orbits in $\fg^\sigma$}\}.
\end{equation}
In the case $\sigma=\id$ and the normalized Kac coordinates satisfies $s_0=1$,
this map is studied in \cite{S} and \cite[\S7.3]{RLYG} (see \S\ref{SD} for more details). 
The relation between this map and Kazhdan-Luszitg map \cite{KL} is discussed in 
\cite[\S8.3, Remark 2]{RLYG}.

We expect that for any stable vector $X\in\fg_1^s$ we have $X_1\in\mO_\theta$.
In other worlds, we expect the conjugacy classes of the residue $\Res(\nabla^X)$, $X\in\fg_1^s$  
depends only on $\theta$ and 
is given by the map (\ref{Yun's map}).
We will verify this expectation in some examples in \S\ref{examples}.

\subsubsection{An example}\label{FG case}
We preserve the setup in example \ref{principal grading}.
Consider $\theta=\on{exp}(\check\rho/h)$, where $\check\rho$ is the half-sum of positive co-roots and $h$ is the Coxeter number.
We have $\fg_0=\ft$ and $\fg_1=\fg(1)\bigoplus\fg(-h+1)$,
$\fg(1)=\bigoplus_{i=1}^\ell\fg_{\alpha_i}$, $\fg(-h+1)=\fg_{-\beta}$. Here $\beta$ is the 
highest root.
Choosing 
a generator $E_i$ for each $\fg_{\alpha_i}$,
a generator $E_0$ for $\fg_{-\beta}$,
 and identifying 
 $\fg_1$ with $\bigoplus_{i=0}^{\ell}\bC E_i$,  
 the open subset $\fg_1^s$ of stable vectors can be identified with 
 $\fg^s_1=\{\sum c_iE_i|c_i\neq 0\ \emph{for}\  i=0,...,\ell\}$.
For any $X=\sum_{i=0}^\ell c_iE_i\in\fg_1^s$, the corresponding $\theta$-connection 
takes the form
\[\nabla^X=d+\frac{\sum_{i=1}^\ell c_iE_i}{t}dt+c_0E_0dt.\]
This is the rigid connections constructed in \cite{FG}.
The residue of $\nabla^X$ at $0$ is $N'=\sum_{i=1}^\ell c_iX_i$,
which is regular nilpotent.

\begin{remark}\label{genreal Heisenberg}
Recall that Heisenberg algebras of the Kac-Moody central extension 
$^\sigma\tilde\fg\oplus\bC K$ are parametrized, up to conjugacy, by 
$\rW$-conjugacy classes of the coset $\rW\sigma$ (see, e.g.,\cite{KP} for the case $\sigma=\on{id}$). Given $w\in\rW\sigma$, let 
$\hat\fa_w$ denote the associated Heisenberg subalgebra.
One can show that, when
$\theta$ is stable torsion automorphism, the algebra $\hat\fa$ in Proposition \ref{non-degenerate} is conjugate to the Heisenberg sub-algebra
$\hat\fa_w$ where $w$ is an element in the regular elliptic conjugacy class of $\rW\sigma$ corresponding to $\theta$.
\end{remark}

\subsection{Slope and Irregularity at $\infty$}
In this section we compute the slope and irregularity of $\nabla^X$ at $\infty$.
We adapt the definition of the slope of a connection 
on a principal $G$-bundle 
from \cite{D,FG} (see 
\cite{BS,CK} for other equivalent definitions):
a connection on a principal $G$-bundle with irregular
singularity at a point $x$ on a curve $X$ has slope $a/b>0$ at this point if the following
holds. Let $s$ be a uniformizing parameter at $x$, and pass to the extension given by
adjoining the $b$-th root of $s$: $u^b=s$. Then the connection, written using the parameter
$u$ in the extension and a particular trivialization of the bundle on the punctured disc at
$x$ should have a pole of order $a+1$ at $x$, and its polar part at $x$ should not be nilpotent.

Let us compute the slopes of $\nabla^X$ at $\infty$ using the definition above.
Consider the covering given by $t=a^{-\frac{m}{e}}$. Then the connection $\nabla^X$ becomes
\[d-\frac{m}{e}\sum_ kX_ka^{k-1}\frac{da}{a}.\]
Taking the gauge transform with $\check\lambda^{-1}(a)$, then $\Ad(\check\lambda^{-1}(a))X_i=a^{-k}X_k$, hence the connection becomes
\begin{equation}\label{cover}
d-\frac{m}{e}X\frac{da}{a^2}+\check\lambda\frac{da}{a}.
\end{equation}
Assume $X$ is semi-simple, then according to the above definition of slopes, we see that the slope of
$\nabla^X$ at $\infty$ are either $0$ or $e/m$.

Recall that any representation $V$ of $G$ gives rise to a flat vector bundle $\nabla^{X,V}$ on $\bG_m$. We compute the irregularity of the connection $\on{Irr}_\infty(\nabla^{X,V})$ at infinity 
when $X$ is semi-simple following \cite[\S13]{FG}. Since $X$ is semi-simple the 
leading term of the connection in (\ref{cover}) is diagonalizable in any representation $V$
of $G$. It implies the slopes of the connection $\nabla^{X,V}$ is either $0$ or $e/m$,
the former occurring at the zero eigenspaces of $X$ on $V$ and the later occurring 
at the non-zero eigenspaces. According to \cite[\S 1 and \S2.3]{Katz}, the irregularity 
$\on{Irr}_\infty(\nabla^{X,V})$ is equal to the sum of the slopes of the connection 
at $\infty$. This implies 
$$
\on{Irr}_\infty(\nabla^{X,V})=\frac{e}{m}(\on{dim}V-\on{dim}V^X).
$$
Assume $\nabla^{X,V}$ descends to a flat vector bundle 
$\bar\nabla^{X,V}$ via the $e$-th power map $[e]:\widetilde\bG_m\ra\bG_m$.
Then again by \cite[\S2.3]{Katz}, we have 
\begin{equation}\label{irr}
\on{Irr}_\infty(\bar\nabla^{X,V})=
\on{Irr}_\infty(\nabla^{X,V})/e=
\frac{1}{m}(\on{dim}V-\on{dim}V^X).
\end{equation}

\begin{remark}
Let me mention that the $\theta$-connections constructed by Z. Yun (in the untwisted case) 
and their slopes 
can be also constructed and computed using the theory of 
regular strata developed by C. Bremer and D. Sage \cite{BS}. 
\end{remark}

\section{Main results}\label{main result}
We preserve the setup of \S\ref{theta}.
Let $\fg=\bigoplus_{i\in\bZ/m\bZ}\fg_i$ 
be a grading of $\fg$ and let $\theta=\theta'\rtimes\sigma\in\Aut(\fg)$
be the corresponding automorphism.
Let $X\in\fg_1$ be a nonzero vector and $\nabla^X$ be the corresponding  
$\theta$-connection, which is a $\sigma$-twisted flat $G$-connection on 
$\bG_m$.

Consider the adjoint representation $\Ad$ of $G$ on its Lie algebra $\fg$. 
The corresponding flat vector bundle 
$\nabla^{X,\Ad}$ descends via the $e$-th power map $[e]:\widetilde\bG_m\ra\bG_m$
by $\sigma$-equivariance. Let $\bar\nabla^{X,\Ad}$ be the connection after descent.

Here are the main results of this note, generalizing
\cite[Theorem 1 and Proposition 11]{FG} to general $\theta$-groups:
\begin{theorem}\label{rigid}
Assume $\theta$ is regular. Then for any regular semi-simple vector
$X\in\fg_1^r$, we have 
\[H^0(\mathbb{P}^1,j_{!*}\bar\nabla^{X,\Ad})=H^2(\mathbb{P}^1,j_{!*}\bar\nabla^{X,\Ad})=0\]
and 
\beq\label{formula}
\on{dim}H^1(\mathbb{P}^1,j_{!*}\bar\nabla^{X,\Ad})=\frac{\#R}{m}-\on{dim}\fg^{\sigma,X_1}.
\eeq
Here $j:\bG_m\hookrightarrow\mathbb P^1$ is the canonical embedding
and $X_1\in\fg_1(1)^{\sigma}$ is the residue of the connection $\nabla^X$ at $0$
(see \S\ref{Res at 0}).

\end{theorem}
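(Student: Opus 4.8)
The plan is to follow the strategy of Frenkel--Gross: first reduce the computation of $H^*(\mathbb P^1,j_{!*}\bar\nabla^{X,\Ad})$ to a purely Lie-algebraic statement about the twisted loop algebra $\Lg$, and then verify that statement using the Kac--Moy--Prasad grading and Proposition~\ref{non-degenerate}. Concretely, pulling back along the $e$-th power map $[e]:\widetilde\bG_m\ra\bG_m$ turns $\bar\nabla^{X,\Ad}$ into the flat bundle on $\bG_m$ with connection $d+p_1\frac{dt}{t}$ valued in $\fg$, so by $\mu_e$-equivariance the cohomology $H^*(\mathbb P^1,j_{!*}\bar\nabla^{X,\Ad})$ is the $\mu_e$-invariant part of the corresponding de Rham cohomology upstairs. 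The operator $\ad(p_1)+t\partial_t$ (suitably normalized, i.e. the operator $D$ of Theorem~\ref{auto}(3) up to the regular-singular twist by $\ad\check\lambda$) acts on $\Lg$, and by the general formalism of \cite[\S7]{FG} the groups $H^0$ and $H^2$ of the intermediate extension are identified with (a quotient/subspace of) the kernel and cokernel of this operator on the subalgebra controlling the local behavior, while $\dim H^1$ is computed by an Euler-characteristic count together with the local terms at $0$ and $\infty$.

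The key steps, in order: (i) carry out the pullback/descent bookkeeping so that everything is phrased for the $\fg$-valued connection $d+p_1\frac{dt}{t}$ on $\bG_m$ and its $\mu_e$-action; (ii) invoke \cite[\S7]{FG} to identify $H^0$ and $H^2$ of $j_{!*}$ with the kernel and cokernel of the relevant linear operator built from $\ad(p_1)$ acting on an appropriate graded piece of $\Lg$, so that the vanishing $H^0=H^2=0$ is equivalent to the injectivity/surjectivity of $\ad(p_1)$ in the relevant range; (iii) use Proposition~\ref{non-degenerate} — the orthogonal decomposition $\Lg=\fa\oplus\fc$ with $\fc=\on{Im}(\ad p_1)$, together with the nondegeneracy of $\langle,\rangle_\sigma$ on $\fa_i\times\fa_{-i}$ and the fact that $\hat\fa$ is a \emph{nonsplit} central extension — to rule out the existence of a nonzero horizontal section or its dual, exactly as Kac's theorem on principal Heisenberg subalgebras is used in \cite{FG}; (iv) finally, compute $\dim H^1$ by Euler characteristic: $\chi=-\dim H^1$ (since $H^0=H^2=0$), and $\chi(\mathbb P^1,j_{!*}\bar\nabla^{X,\Ad})$ is computed by the Euler--Poincaré/Deligne formula as $\on{rk}\cdot\chi(\mathbb P^1)$ minus the local Swan and drop terms at $0$ and $\infty$; the irregularity at $\infty$ is $\frac{1}{m}(\dim\fg-\dim\fg^X)=\frac{1}{m}(\#R + \ell - \ell)=\frac{\#R}{m}$ by (\ref{irr}) (using that $X$ regular semisimple gives $\dim\fg^X=\ell$), and the tame drop at $0$ is $\dim\fg-\dim\fg^{\sigma,\Res}=\dim\fg-\dim(\ker$ of the residue$)$, which after assembling the formula yields exactly $\frac{\#R}{m}-\dim\fg^{\sigma,X_1}$.

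I expect step (iii) — the actual cohomology vanishing — to be the main obstacle, just as in \cite{FG}. The point is that a nonzero element of $H^0$ (or of $H^2$, dually) would produce an element $z$ of $\fa$ in the kernel of the full differential operator $D$ (or its adjoint), i.e. an element killed by $t\partial_t + \ad\check\lambda - \text{(appropriate shift)}$; one then has to show, using commutativity of $\fa$ and the nondegenerate pairing $\fa_i\times\fa_{-i}\to\bC$ from Proposition~\ref{non-degenerate}(2), that such a $z$ must actually be central, contradicting the nonsplitness in Proposition~\ref{non-degenerate}(3). Making the identification of $H^0,H^2$ with these kernels precise — i.e. unwinding the $j_{!*}$ versus $j_!$ or $j_*$ comparison and tracking which graded components of $\Lg$ are relevant (presumably the non-negatively-graded part, reflecting regularity at $0$, intersected with the condition coming from the irregular point at $\infty$) — is the delicate bookkeeping, but once set up correctly the vanishing is formal from the nonsplit Heisenberg structure. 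The Euler-characteristic computation in step (iv) is routine given (\ref{irr}) and Lemma~\ref{positive}.
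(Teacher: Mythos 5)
Your proposal follows essentially the same route as the paper's proof: reduce via the Frenkel--Gross formalism to computing $\on{Ker}(\nabla^{X,\Ad})$ on $\fg((t))^\sigma$ and on $\fg((t^{-1}))^\sigma$, kill the latter using the Kac--Moy--Prasad grading together with the orthogonal decomposition and nondegenerate pairing of Proposition~\ref{non-degenerate} (this is exactly the paper's Lemma~\ref{FG}), and then obtain $\dim H^1$ from Deligne's Euler-characteristic formula, the irregularity computation (\ref{irr}), and the identification of the local invariants at $0$ with $\fg^{\sigma,X_1}$. The steps you flag as delicate are precisely the ones the paper carries out, and your sketch of them is accurate.
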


\begin{theorem}\label{untwisted}
Assume $\theta$ is stable and its normalized Kac coordinates satisfies $s_0=1$.
Then for any stable vector $X\in\fg_1^s$, we have 
\[H^i(\mathbb{P}^1,j_{!*}\bar\nabla^{X,\Ad})=0\]
for all $i$, that is, $\nabla^X$ is cohomologically rigid (see Definition \ref{coh-rigid}).

\end{theorem}

\section{Proofs}

\subsection{}
The key step leading the proofs of Theorem \ref{rigid}  and Theorem \ref{untwisted} 
is the computation of the cohomology groups 
$H^0(D_0^\times,\bar\nabla^{X,\Ad})$ and $H^0(D_\infty^\times,\bar\nabla^{X,\Ad})$. Here $D_0^\times=\Spec\bC((t))$ (resp. $D_\infty^\times=\Spec\bC((t^{-1}))$) is the formal punctured disc around 
$0$ (resp. $\infty$).

We first introduce some auxiliary notations that will be used in the rest of the section.
Let $\nabla^X=d+p_1\frac{dt}{t}$ be the $\theta$-connection associated to $X\in\fg_1^s$ (see \S\ref{construction}).
Recall $p_1=\sum_{} X_kt^{\frac{e(1-k)}{m}}\in\Lg$.
The connection $\nabla^X$ gives a $\bC$-linear map
\[\nabla^{X,\Ad}:\fg[[t,t^{-1}]]\ra\fg[[t,t^{-1}]]\frac{dt}{t}.\]
Let $f=\sum v_nt^n\in\fg[[t,t^{-1}]]$ be a solution to 
$\nabla^{X,\Ad}(f)=0$. The  components $v_n$ satisfy
\begin{equation}\label{solution}
nv_n+[X_1,v_n]+\sum_{-m+es_0\leq i\leq 0,\ i\equiv1\on{mod}\frac{m}{e}} [X_i,v_{a_i}]=0,
 \end{equation}
where $a_i=n-\frac{e(1-i)}{m}\in\bZ$. Notice that $a_i<n$ for all $i$. 

\subsubsection{}\label{inertia at 0}
We compute $H^0(D_0^\times,\bar\nabla^{X,\Ad})$. 
Recall $\bar\nabla^{X,\Ad}$ is the descent of $\nabla^{X,\Ad}$ along the $e$-th power map
$[e]:\bG_m\ra\bG_m$. Thus $H^0(D_0^\times,\bar\nabla^{X,\Ad})=H^0(D_0^\times,[e]^*\bar\nabla^{X,\Ad})^\sigma
=H^0(D_0^\times,\nabla^{X,\Ad})^\sigma=\on{Ker}(\nabla^{X,\Ad}:\fg((t))^\sigma\ra\fg((t))^\sigma\frac{dt}{t})$.
Let $f=\sum v_nt^n\in\fg((t))^\sigma$ be a solution to 
$\nabla^{X,\Ad}(f)=0$. If $f\neq 0$,
then there exists $b\in\bZ$ such that $v_b\neq 0$ and $v_s=0$ for $s<b$. 
We claim that $b\geq0$. Indeed, if $b<0$, 
then
equation (\ref{solution}) implies \[bv_b+[X_1,v_b]=0,\] 
which is impossible since the operator $b\cdot\Id+X_1$ is invertible (recall $X_1\in\fg(1)^\sigma$ is nilpotent). 
Thus $f=\sum v_nt^n\in\fg[[t]]^\sigma$ and $v_0\in\fg^\sigma$ lies in the kernel of $\ad(X_1)$. The equation (\ref{solution}) also implies there is a unique
solution $f=\sum_{} v_nt^n$ in $\fg[[t]]^\sigma$ for each $v_0\in\fg^{\sigma,X_1}$. Above discussion shows that 
\[H^0(D_0^\times,\bar\nabla^{X,\Ad})=\on{Ker}(\nabla^{X,\Ad}:\fg((t))^\sigma\ra\fg((t))^\sigma\frac{dt}{t})=\fg^{\sigma,X_1}.\]

\subsubsection{}\label{inertia at infinity}
We show that 
$H^0(D_\infty^\times,\bar\nabla^{X,\Ad})$ is zero.
For this, we need some 
preliminary results about solutions $f\in\fg[[t,t]]^\sigma$ to $\nabla^{X,\Ad}(f)=0$.
Let $f$ be such a solution.
If we write $f$  in its components for the 
\emph{Kac grading}: $f=\sum y_n$ where $y_n\in\Lg_{x,n}$, then we have
\[(\frac{m}{e}t\nabla^{X,\Ad})(f)=\sum_n((\frac{m}{e}t\partial_t+\ad\check\lambda)y_n+\frac{m}{e}[p_1,y_n]-[\check\lambda,y_n])dt=0.\]
Recall $p_1=\sum_{} X_kt^{\frac{e(1-k)}{m}}\in\Lg$.

Notice that the operator $\frac{m}{e}t\partial_t+\ad\check\lambda$ is exactly the 
derivation $D$ of $\Lg$ in Theorem \ref{auto} which defines the \emph{Kac grading}.
Thus we have $(\frac{m}{e}t\partial_t+\ad\check\lambda)y_n=ny_n$ and above equation gives rise to the identity 
\begin{equation}\label{equ theta}
ny_n-[\check\lambda,y_n]+\frac{m}{e}[p_1,y_{n-1}]=0
\end{equation}
for all $n\in\bZ$.
 
We have the following lemma 
\begin{lemma}[\cite{FG}, Lemma 6]\label{FG}
Suppose that $y_n$ satisfying (\ref{equ theta}) and $y_n\in\fa_n$ for some $n$.
Then $y_m=0$ for all $m\leq n$.
\end{lemma}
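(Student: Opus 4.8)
The plan is to argue by contradiction, exploiting the orthogonal decomposition $\Lg=\fa\oplus\fc$ from Proposition \ref{non-degenerate}(1) and the non-degeneracy of the pairing between $\fa_n$ and $\fa_{-n}$ from part (2). Suppose $y_n\in\fa_n$ but $y_m\neq 0$ for some $m<n$; choose the largest such $m$, so that $y_{m+1},\dots,y_{n-1}\in\fa$ as well (they equal their $\fa$-components, and in fact one wants to show they all vanish). The idea is to feed equation (\ref{equ theta}) into the invariant form $\langle\,\cdot\,,z\rangle_\sigma$ against a suitable test element $z\in\fa_{-m}$ and to derive, step by step going down from $n$, that each $y_k$ for $k\le n$ must lie in $\fa_k$ and then must be zero.

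The key step is the following local statement: if $y_k\in\fa_k$, then $y_{k-1}\in\fa_{k-1}$ and moreover $(k-1)y_{k-1}-[\check\lambda,y_{k-1}]=0$, forcing $y_{k-1}=0$ because $k-1\cdot\Id-\ad\check\lambda$ acts invertibly on $\fa_{k-1}$ (this invertibility is exactly the content of the computation $\langle t\partial_t(z),z'\rangle_\sigma=\langle\frac{(k-1)e}{m}z,z'\rangle_\sigma$ in the proof of Proposition \ref{non-degenerate}(3): the operator $t\partial_t$, equivalently $\frac{e}{m}(D-\ad\check\lambda)$, is non-degenerate on each graded piece $\fa_{k-1}$ with $k-1\neq 0$ via the pairing with $\fa_{1-k}$, and for $k-1=0$ one handles $\fa_0$ separately using that $p_1=\Phi(uX)$ is regular semisimple so $\fa_0$ pairs non-trivially). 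To get $y_{k-1}\in\fa_{k-1}$, decompose $y_{k-1}=y_{k-1}^{\fa}+y_{k-1}^{\fc}$; pair (\ref{equ theta}) in degree $k$ — namely $ky_k-[\check\lambda,y_k]+\frac{m}{e}[p_1,y_{k-1}]=0$ — against an arbitrary element $w\in\fc_{-k}$. Since $y_k,[\check\lambda,y_k]\in\fa_k\perp\fc_{-k}$, we get $\langle [p_1,y_{k-1}],w\rangle_\sigma=0$, i.e.\ $\langle y_{k-1},[p_1,w]\rangle_\sigma=0$ for all $w\in\fc_{-k}$; since $\ad p_1$ maps $\fc$ onto $\fc$ (it is invertible on $\fc=\on{Im}(\ad p_1)$, a complement to its kernel), this says $y_{k-1}\perp\fc_{1-k}$, hence $y_{k-1}\in\fa_{k-1}$. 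Then pairing (\ref{equ theta}) in degree $k$ against $\fa_{-k}$ and using $[p_1,y_{k-1}]\in[p_1,\fa]=0$ (since $y_{k-1}\in\fa$ and $\fa$ is abelian, so $[p_1,y_{k-1}]$ — wait, $p_1\in\fa_1$ — indeed $[p_1,y_{k-1}]=0$) we recover $ky_k-[\check\lambda,y_k]=0$ on the nose; for $k=n$ this just says $n y_n=[\check\lambda,y_n]$, consistent, but the real payoff is running the argument at the next level down.

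Carrying this out: from $y_n\in\fa_n$ the paragraph above gives $y_{n-1}\in\fa_{n-1}$ and $(n-1)y_{n-1}-[\check\lambda,y_{n-1}]=0$, so $y_{n-1}=0$ by the invertibility of $(n-1)\Id-\ad\check\lambda$ on $\fa_{n-1}$; in particular $y_{n-1}\in\fa_{n-1}$ trivially and we repeat to get $y_{n-2}=0$, and so on down to $y_0=0$ (here the $k-1=0$ case needs the separate non-degeneracy remark, but $\ad\check\lambda$ on $\fa_0$: one uses instead that (\ref{equ theta}) in degree $1$ reads $y_1-[\check\lambda,y_1]+\frac{m}{e}[p_1,y_0]=0$ and pairs against $\fa_{-1}$, but cleaner is to note $0\cdot y_0-[\check\lambda,y_0]=0$ forces $y_0\in\fg^{p_1}\cap\ker(\ad\check\lambda)$, which is $0$ since $\fa_0=\fg^{p_1}_0$ is a Cartan of the reductive part and $\check\lambda$ is regular there — this is the one place requiring a small extra argument). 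Continuing into negative degrees, the recursion $ny_n=[\check\lambda,y_n]-\frac{m}{e}[p_1,y_{n-1}]$ with all previously-produced terms zero gives $n y_n = [\check\lambda, y_n]$ for each $n\le 0$, and the same invertibility kills them. The main obstacle I anticipate is the bookkeeping at degree $0$: showing $(0\cdot\Id-\ad\check\lambda)|_{\fa_0}$ is injective, which amounts to $\fa_0\cap\ker\ad\check\lambda=0$; this follows because $\fa_0=Z_{\fg}(p_1)\cap\fg(0)^\sigma$ is a Cartan subalgebra of the reductive centralizer and $\check\lambda=mx$ with $x$ in the interior of (a face of) the fundamental alcove acts without nonzero fixed vectors on it precisely when $\theta$ is regular/stable — so this step silently uses the regularity hypothesis on $\theta$ that is in force throughout \S\ref{KMP}. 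Once that is in hand the lemma follows by descending induction on $n$.
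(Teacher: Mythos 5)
Your overall strategy --- a descending induction driven by the orthogonal decomposition $\Lg=\fa\oplus\fc$ and the non-degenerate pairing between $\fa_i$ and $\fa_{-i}$ --- is the same as the paper's, and your mechanism for killing a component once it is known to lie in $\fa$ is exactly the paper's. But the step you use to propagate membership in $\fa$ downward is broken. To deduce $y_{k-1}\in\fa_{k-1}$ from $y_k\in\fa_k$ you pair the degree-$k$ relation against $w\in\fc_{-k}$ and discard the term $\langle[\check\lambda,y_k],w\rangle_\sigma$ on the grounds that $[\check\lambda,y_k]\in\fa_k$. This is false in general: $\fa=\on{Ker}(\ad p_1)$ is not $\ad\check\lambda$-stable, because $[\check\lambda,p_1]=\Phi(u\sum_k kX_k)$ is nonzero as soon as $X$ has a component outside $\fg_1(0)$, and by the Jacobi identity $[p_1,[\check\lambda,y_k]]=[[p_1,\check\lambda],y_k]$, which has no reason to vanish (under $\Phi^{-1}$, $y_k$ corresponds to $u^ks$ with $s$ in the Cartan $\fg^X$, and $s$ does not preserve the $\check\lambda$-grading, so $[s,\sum_k kX_k]\neq0$ in general). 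Hence the pairing against $\fc_{-k}$ does not give $y_{k-1}\perp\fc_{1-k}$, and your induction fails precisely at its base case $k=n$, the only level at which $y_k$ is not already known to vanish.

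The repair is to reverse the order of the two halves of your ``local statement,'' which is what the paper does: first kill $y_k$ itself. From (\ref{equ theta}) one has $\frac{e}{m}(ky_k-[\check\lambda,y_k])=t\partial_t(y_k)=-[p_1,y_{k-1}]\in\fc$; pairing with any $z\in\fa_{-k}$ and using $\fa\perp\fc$ together with $\langle[\check\lambda,y_k],z\rangle_\sigma=\langle\check\lambda,[y_k,z]\rangle_\sigma=0$ (commutativity of $\fa$) gives $\frac{ke}{m}\langle y_k,z\rangle_\sigma=0$ for all such $z$, hence $y_k=0$ for $k\neq0$ by the non-degeneracy in Proposition \ref{non-degenerate}(2). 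Only then does the degree-$k$ relation read $[p_1,y_{k-1}]=0$ on the nose, i.e.\ $y_{k-1}\in\fa_{k-1}$, with no pairing against $\fc$ required. Your worry about degree $0$ is legitimate --- the pairing argument degenerates there, and the paper's own appeal to the proof of Proposition \ref{non-degenerate}(3) silently assumes the degree is nonzero --- but your proposed patch is circular: concluding $[\check\lambda,y_0]=0$ from the degree-$0$ relation presupposes $[p_1,y_{-1}]=0$, which is exactly what one is trying to prove at that stage. What actually disposes of this case in the stable setting is that ellipticity forces $\fa_0\cong Z_{\fg_0}(X)=0$, as you note at the end.
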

\begin{proof}
Assume that $y_n\neq 0$. 
In the course of the proof of Corollary \ref{non-degenerate} (part (3)),
we have shown that there exists $z\in\fa_{-n}$ such
that $\langle t\partial_t(y_n),z\rangle_\sigma\neq 0$. On the other hand,
since $y_n$ satisfies (\ref{equ theta}), we have 
\[t\partial_t(y_n)=\frac{e}{m}(D-\ad\check\lambda)(y_n)=\frac{e}{m}(ny_n-[\check\lambda,y_n])=-[p_1,y_{n-1}]\in\fc\]
and it implies $\langle t\partial_t(y_n),z\rangle_\sigma=0$. 
We get a contradiction. Hence $y_n$ must be zero.

Now, the equation (\ref{equ theta}) shows that if $y_n=0$ then
$y_{n-1}\in\fa_{n-1}$, hence, by induction that $y_m=0$ for all
$m\leq n$. 
\end{proof}

Above lemma implies $H^0(D_\infty^\times,\bar\nabla^{X,\Ad})=0$.
To see this, observe that  
\[H^0(D_\infty^\times,\bar\nabla^{X,\Ad})=\on{Ker}(\nabla^{X,\Ad}:\fg((t^{-1}))^\sigma\ra\fg((t^{-1}))^\sigma\frac{dt}{t}).\]
Let $f\in\on{Ker}(\nabla^{X,\Ad}:\fg((t^{-1}))^\sigma\ra\fg((t^{-1}))^\sigma\frac{dt}{t})$. Then 
we have $v_n=0$ for $n\gg 0$. This implies $y_n=0$ for $n\gg 0$ (recall that $y_n$ are the components of $f$ for the 
\emph{Kac grading}), hence by above lemma $y_n=0$ for all $n$.
So we must have $f=0$.

\subsection{Proof of Theorem \ref{rigid}}
According to \cite[\S8]{FG}, we have 
\[H^0(\mathbb{P}^1,j_{!*}\bar\nabla^{X,\Ad})=H^0(\bG_m,\bar\nabla^{X,\Ad}),\]
\[H^2(\mathbb{P}^1,j_{!*}\bar\nabla^{X,\Ad})=H^2_c(\bG_m,\bar\nabla^{X,\Ad}),\]
and there is an exact sequence
\[0\ra H^0(\bG_m,\bar\nabla^{X,\Ad})\ra H^0(D_0^\times,\bar\nabla^{X,\Ad})\oplus
H^0(D_\infty^\times,\bar\nabla^{X,\Ad})\ra \]
\[H^1_c(\bG_m,\bar\nabla^{X,\Ad})\ra 
H^1(\mathbb{P}^1,j_{!*}\bar\nabla^{X,\Ad})\ra0.\]
We first prove $H^0(\mathbb{P}^1,j_{!*}\bar\nabla^{X,\Ad})=H^2(\mathbb{P}^1,j_{!*}\bar\nabla^{X,\Ad})=0.$
Since $H^0(D_\infty^\times,\bar\nabla^{X,\Ad})=0$ by the result in \S\ref{inertia at infinity}, 
 $\bar\nabla^{X,\Ad}$ admits no global sections, i.e.,  
 $H^0(\mathbb{P}^1,j_{!*}\bar\nabla^{X,\Ad})=H^0(\bG_m,\bar\nabla^{X,\Ad})=0$.
Dually, $H^2(\mathbb{P}^1,j_{!*}\bar\nabla^{X,\Ad})=H^2_c(\bG_m,\bar\nabla^{X,\Ad})=
H^0(\bG_m,\bar\nabla^{X,\Ad})^*=0$. Here we used the fact adjoint representation 
$\Ad$ is self-dual, hence $(\bar\nabla^{X,\Ad})^*\is\bar\nabla^{X,\Ad}$.

Now we prove $\on{dim}H^1(\mathbb{P}^1,j_{!*}\bar\nabla^{X,\Ad})=
\#R/m-\on{dim}\fg^{\sigma,X_1}$.
Results from \S\ref{inertia at 0}, \S\ref{inertia at infinity} and above exact sequence imply
\begin{equation}\label{H_c^1}
0\ra\fg^{\sigma,X_1}\ra H^1_c(\bG_m,\bar\nabla^{X,\Ad})\ra 
H^1(\mathbb{P}^1,j_{!*}\bar\nabla^{X,\Ad})\ra0.
\end{equation}
Thus it suffices to prove that
$\on{dim}H^1_c(\bG_m,\bar\nabla^{X,\Ad})=\#R/m$.

Recall the Deligne's formula in \cite[\S6.21.1]{D} for the Euler characteristic
\[\chi_c(\bG_m,\bar\nabla^{X,\Ad}):=\sum_i (-1)^i\on{dim}H^i_c(\bG_m,\bar\nabla^{X,\Ad})
=\chi_c(\bG_m)\on{rank}(\bar\nabla^{X,\Ad})-\sum_{\alpha=0,\infty}
\on{Irr}_\alpha(\bar\nabla^{X,\Ad}).\]
Since $\chi_c(\bG_m)=0$ and $\bar\nabla^{X,\Ad}$ is regular at $0$, it implies
\[\chi_c(\bG_m,\bar\nabla^{X,\Ad})=-\on{Irr}_\infty(\bar\nabla^{X,\Ad}).\]
Using the vanishing of $H^0_c$, $H_c^2$ and the formula in line (\ref{irr}), we get
\[\on{dim}H^1_c(\bG_m,\bar\nabla^{X,\Ad})=\on{Irr}_\infty(\bar\nabla^{X,\Ad})
=\frac{1}{m}(\on{dim}\fg-\dim\fg^X).\]
Since $X$ is regular semi-simple, we have $\frac{1}{m}(\on{dim}\fg-\dim\fg^X)=\#R/m$, hence
\[\on{dim}H^1_c(\bG_m,\bar\nabla^{X,\Ad})=\#R/m.\]
This finished the proof of Theorem \ref{rigid}.

\subsection{Proof of Theorem \ref{untwisted}}
It is enough to show that $H^1(\mathbb{P}^1,j_{!*}\bar\nabla^{X,\Ad})=0$.
We begin with the following lemma:

\begin{lemma}\label{FG1}
For any solution $f=\sum v_nt^n$ of $\nabla^{X,\Ad}(f)=0$ in $\fg[[t,t^{-1}]]^\sigma$ we have 
$v_n=0$ for all $n<0$.
\end{lemma}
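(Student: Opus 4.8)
The statement to prove is Lemma \ref{FG1}: for a stable $\theta$ with $s_0 = 1$ and a stable $X \in \fg_1^s$, any solution $f = \sum v_n t^n$ of $\nabla^{X,\Ad}(f) = 0$ in $\fg[[t,t^{-1}]]^\sigma$ has $v_n = 0$ for all $n < 0$. My plan is to run the same "lowest-term" argument as in \S\ref{inertia at 0}, but now using the extra hypothesis $s_0 = 1$ to push the conclusion all the way down to $n < 0$ rather than just $n < b$ for the smallest $b$ with $v_b \neq 0$. First I would argue by contradiction: suppose $f \neq 0$ has some nonzero negative component, and let $b$ be the smallest index with $v_b \neq 0$, so $v_s = 0$ for $s < b$, and assume $b < 0$. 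Looking at equation (\ref{solution}) in degree $n = b$: all the terms $[X_i, v_{a_i}]$ with $a_i = b - \tfrac{e(1-i)}{m} < b$ vanish, leaving
\[
b v_b + [X_1, v_b] = 0 .
\]
Since $X_1 \in \fg(1)^\sigma$ is nilpotent, $\ad(X_1)$ is nilpotent, so $b\cdot\Id + \ad(X_1)$ is invertible for $b \neq 0$; hence $v_b = 0$, a contradiction.

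Wait — that argument already gives $v_n = 0$ for all $n < 0$ directly, using only nilpotence of $X_1$, exactly as in \S\ref{inertia at 0}. So the substantive content must be that this lemma is being invoked in a regime where $f$ is \emph{not} assumed to have a well-defined lowest term — namely $f \in \fg[[t,t^{-1}]]^\sigma$ allows infinitely many negative terms. In that case the plan is different: I would pass to the \emph{Kac-Moy-Prasad} components $f = \sum_n y_n$, $y_n \in \Lg_{x,n}$, so that $f$ having only finitely many nonzero $y_n$ with $n$ below a bound corresponds (via Corollary \ref{positive 1} and the hypothesis $s_0 = 1$, which forces $-m + es_0 = -m + e \geq$ the relevant lower bound) to control on the $t$-powers. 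Concretely, with $s_0 = 1$ Corollary \ref{positive 1}(2) says $\fg_i(k) = 0$ unless $-m + e \leq k \leq i$; combined with $k \equiv i \bmod \tfrac{m}{e}$ this bounds which $t^{e(i-k)/m}$ occur, and one checks the connection operator $\frac{m}{e} t \nabla^{X,\Ad}$ respects a filtration on which the negative KMP-degree part is finite-dimensional in each total $t$-degree. Then equation (\ref{equ theta}),
\[
n y_n - [\check\lambda, y_n] + \tfrac{m}{e}[p_1, y_{n-1}] = 0,
\]
together with Lemma \ref{FG} (which says: if $y_n \in \fa_n$ for some $n$ then $y_m = 0$ for all $m \leq n$), lets me conclude once I establish that for $n$ sufficiently negative $y_n$ lands in $\fa_n = \on{Ker}(\ad p_1)_n$.

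\textbf{The main obstacle.} The crux is showing that the negative-degree KMP components are forced into $\fa$. The mechanism should be: $s_0 = 1$ means $\fg_0 = \fg(0)^\sigma$ (Lemma \ref{positive}(2)), which makes the grading "as spread out as possible" and, crucially, makes $X \in \fg_1$ equal to its own leading component in a way that $p_1 = \Phi(uX)$ has a single leading KMP-term $X_1 \in \Lg_{x,1}$ with the lower terms $X_k t^{e(1-k)/m}$, $k < 1$, landing in strictly negative $t$-powers — no, positive $t$-powers. The point is rather that with $s_0 = 1$ the element $p_1$ is \emph{homogeneous of degree $1$} for the KMP grading and is \emph{regular semisimple} in $\Lg$, so $\ad p_1 \colon \Lg_{x,n-1} \to \Lg_{x,n}$ is as non-degenerate as the stability of $X$ permits; for $X$ stable, Proposition \ref{non-degenerate} applies and $\Lg = \fa \oplus \fc$ with $\fa$ abelian. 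Then: reading (\ref{equ theta}) in the lowest nonzero KMP-degree $b$ of $f$ (if $f$ had a lowest degree), $b y_b - [\check\lambda, y_b] = 0$, i.e. $\tfrac{e}{m} D(y_b) = 0$ where $D$ is the KMP derivation — but $D$ acts on $\Lg_{x,b}$ as multiplication by $b \neq 0$, so $y_b = 0$. The genuinely delicate part is legitimizing "lowest degree" when $f$ lies in $\fg[[t,t^{-1}]]^\sigma$: I would argue that a solution lying in $\on{Ker}(\nabla^{X,\Ad}: \fg((t^{-1}))^\sigma \to \cdots)$ — which is the case of interest, by the surrounding exact sequence — has $v_n = 0$ for $n \gg 0$, hence finitely many nonzero KMP-components $y_n$ above, hence (running the recursion upward from the top) is determined by finitely much data, and then the bound $s_0 = 1$ forces the solution space downward to be trivial. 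I expect the careful bookkeeping of which $t$-powers and which $\fg_i(k)$ can occur — to guarantee the recursion (\ref{solution}) terminates and no "hidden" negative tail survives — to be the main technical burden, but it is bookkeeping, not a new idea beyond \S\ref{inertia at 0} and Lemma \ref{FG}.
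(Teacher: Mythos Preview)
There is a genuine gap. You have correctly assembled the ingredients --- the Kac--Moy--Prasad components $y_n$, the recursion~(\ref{equ theta}), Lemma~\ref{FG}, and the fact (Lemma~\ref{positive}(2)) that $s_0=1$ forces $\fg_0=\fg(0)^\sigma$ --- but you have not found the step that connects them, and the two concrete arguments you attempt do not work. The ``lowest $t$-degree'' argument fails because an element of $\fg[[t,t^{-1}]]^\sigma$ need not have a lowest term; this is the whole point of the lemma, and your detour into $\fg((t^{-1}))^\sigma$ is irrelevant (the application is to $H^1_c$, computed via $\fg[[t,t^{-1}]]^\sigma$, not to $H^0(D^\times_\infty)$). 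The ``lowest KMP-degree'' argument fails for the same reason, and in addition your deduction from it is wrong: from $by_b-[\check\lambda,y_b]=0$ you do \emph{not} get $D(y_b)=0$; you get $\tfrac{m}{e}\,t\partial_t\,y_b=0$, which only says $y_b$ sits in $t$-degree zero, and for $b<0$ with $s_0=1$ that piece $\fg_{\bar b}(b)$ can perfectly well be nonzero. Also, $p_1$ is homogeneous of KMP degree~$1$ by construction regardless of $s_0$, so that is not where the hypothesis enters.

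The missing idea is that you should not look for an extremal degree at all: look at the fixed degree $n=0$. By Corollary~\ref{positive 1}(1) one has $\Lg_{x,0}\cong\fg_0$, and the hypothesis $s_0=1$ gives $\fg_0=\fg_0\cap\fg(0)\subset\ker(\ad\check\lambda)$. Hence $[\check\lambda,y_0]=0$, and equation~(\ref{equ theta}) at $n=0$ collapses to $[p_1,y_{-1}]=0$, i.e.\ $y_{-1}\in\fa_{-1}$. Now Lemma~\ref{FG} applies directly and gives $y_n=0$ for all $n<0$ --- no lowest-term hypothesis needed. Finally, to pass from $y_n=0$ for $n<0$ back to $v_n=0$ for $n<0$, you still need to check that $\sum_{n\ge 0}y_n\in\fg[[t]]^\sigma$; this is where Corollary~\ref{positive 1}(2) (positive $t$-powers for $i>0$) and again $\fg_0=\fg_0\cap\fg(0)$ (for $i=0$) are used. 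This last step is indeed bookkeeping, but the $n=0$ trick is not.
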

\begin{proof}
Write $f=\sum y_n$ in the components for the Kac grading.
When $n=0$, the equation (\ref{equ theta}) becomes
\[-[\check\lambda,y_0]+m[p_1,y_{-1}]=0.\]
Since $s_0=1$ by assumption, Lemma \ref{positive} implies 
$y_0\in\fg_{0}=\fg_0\cap\fg(0)\subset\on{ker}(\ad\check\lambda)$. Therefore above equation implies 
$y_{-1}\in\fa_{-1}$, thus by Lemma \ref{FG}, we have
$y_{n}=0$ for $n<0$, or equivalently $f=\sum_{n\geq 0} y_n$. 
On the other hand, Corollary \ref{positive 1} and the fact $\fg_0=\fg_0\cap\fg(0)$ imply $y_n\in\fg[t]$ for $n\geq0$. 
The Lemma follows. 
 \end{proof}

By \cite[\S 9]{FG}, we have 
$H^1_c(\bG_m,\bar\nabla^{X,\Ad})\is\on{Ker}(\nabla^{X,\Ad}:\fg[[t,t^{-1}]]^\sigma\ra\fg[[t,t^{-1}]]^\sigma\frac{dt}{t})$, which is equal to 
$\on{Ker}(\nabla^{X,\Ad}:\fg[[t]]^\sigma\ra\fg[[t]]^\sigma\frac{dt}{t})$ by above Lemma. The same argument as in \S\ref{inertia at 0} shows that $\on{Ker}(\nabla^{X,\Ad}:\fg[[t]]^\sigma\ra\fg[[t]]^\sigma\frac{dt}{t})=\fg^{\sigma,X_1}$. Therefore the first two terms in the 
short exact sequence (\ref{H_c^1}) 
both have dimension $\dim\fg^{\sigma,X_1}$
. This proves the vanishing of $H^1(\mathbb P^1,j_{!*}\bar\nabla^{X,\Ad})$, hence finished the proof of 
Theorem \ref{untwisted}

\section{Examples}\label{examples}
In this section we give several examples of $\theta$-connections $\nabla^X$.
In each example we write down the connection explicitly and 
check its cohomological rigidity using the formula in (\ref{formula}).
We also check that, in each case, the residue of the $\theta$-connection at $0\in\mathbb P^1$ (or rather its conjugacy classes) 
 depends only on $\theta$, hence verify our expectation in \S\ref{Res at 0}.
References for this section are 
\cite{FG,RLYG,RY}.

\subsection{S-distinguished nilpotent case}\label{SD}
Recall that a nilpotent element $N$ in $\fg$ is called \emph{distinguished} if
$\fg^N$ consists of nilpotent elements. 
Let $N\in\fg$ be a distinguished nilpotent element.
There is a co-character $\check\lambda$ such that $\Ad\check\lambda(t)N=tN$
for all $t\in\bC^\times$. This gives a grading $\fg=\bigoplus_{k=-a}^{a}\fg(k)$
where $\fg(k)=\{x\in\fg|\Ad\check\lambda(t)x=t^kx\}$.
Set $m=a+1$ and consider the inner automorphism 
$\theta_N:=\check\lambda(\xi_m)\in\Aut(\fg)$. We have $\fg_0=\fg(0)$ and $\fg_1=\fg(1)\oplus\fg(-a)$.
Following \cite[\S7.3]{RLYG}, we say that a distinguished nilpotent element 
$N\in\fg$ is \emph{S-distinghuished} if 
the automorphism $\theta_N$ is stable. According to \emph{loc. cit.},
a nilpotent element $N$ is $S$-distinguished 
if and only if there exits $E\in\fg(-a)$ such that
$N+E\in\fg_1$ is stable. 
Moreover, assume $\fg$ is of exceptional 
type, the map $N\ra\theta_N$ defines a bijection between the set
of $S$-distinguished nilpotent orbits in $\fg$ to the set
of stable inner automorphism on $\fg$ with $s_0=1$.

Let $\theta_N$ be the stable automorphism of $\fg$ corresponding to
a S-distinguished nilpotent element $N\in\fg$. 
Let $X=N+E\in\fg_1=\fg(1)\oplus\fg(-a)$ be a stable vector.
Then by the formula in (\ref{connection}), the corresponding $\theta$-connection $\nabla^X$ takes the form
\[\nabla^X=d+\frac{N}{t}dt+Edt.\]
Note that $N$ is the residue of $\nabla^X$ at zero.

Let us verify that $\nabla^X$ is cohomologically rigid, i.e., $\dim H^*(\mathbb{P}^1,j_{!*}\nabla^{X,\Ad})=0$.
By Theorem \ref{rigid}, we have $H^0=H^2=0$. Thus it remains to show  
$H^1(\mathbb{P}^1,j_{!*}\nabla^{X,\Ad})=\frac{\#R}{m}-\on{dim}\fg^{N}=0$.
To see this recall that $N$ is distinguished, thus we have $\dim\fg^N=\dim\fg(0)=\dim\fg_0$.
On the other hand, we have $\dim\fg_0=\frac{\#R}{m}$ (see \cite[Theorem 4.2]{P}).
Result follows.

\quash{
\footnote{Since the Kac coordinates of $\theta_N$ 
not necessarily satisfies $s_0=1$, the rigidity does not follow from
Theorem \ref{untwisted}}.}

\quash{
\subsubsection{Frenkel-Gross case}\label{FG case}
We preserve the setup in \S\ref{setup}.
Consider the regular nilpotent element $N=\sum_{i=1}^l E_i$ in\ $\fg$. By \cite[Corollary 6.4]{Kos}, it is S-distinguished.
We can take the co-character to be $\check\lambda=\check\rho$ 
and $\theta_N=\check\rho(\xi_h)\in\Aut(\fg)$, where $\check\rho$ is the half-sum of positive co-roots and $h$ is the Coxeter number.

 We have $\fg_0=\ft$ and $\fg_1=\fg(1)\bigoplus\fg(-h+1)$,
$\fg(1)=\bigoplus_{i=1}^\ell\fg_{\alpha_i}$, $\fg(-h+1)=\fg_{-\beta}$. Here $\beta$ is the 
highest root.
Choosing a generator $E_0$ for $\fg_{-\beta}$
 and identifying 
 $\fg_1$ with $\bigoplus_{i=0}^{\ell}\bC E_i$,  
 the open subset $\fg_1^s$ of stable vectors can be identified with 
 $\fg^s_1=\{\sum c_iE_i|c_i\neq 0\ \emph{for}\  i=0,...,\ell\}$.
For any $X=\sum c_iE_i\in\fg_1^s$, the corresponding $\theta$-connection 
takes the form
\[\nabla^X=d+\frac{\sum_{i=1}^\ell c_iE_i}{t}dt+c_0E_0dt.\]
This is the rigid connections constructed in \cite{FG}.
Note that the residue of $\nabla^X$ at $0$ is $N'=\sum_{i=1}^\ell c_iX_i$,
which is regular nilpotent.
}

 \subsubsection{Type $G_2$}
Let $\fg$ is the simple Lie algebra of type $G_2$.
Let $\alpha_1,\alpha_2$ be the simple root of $\fg$, where $\alpha_2$ is the short root. 
Consider the automorphism $\theta=\check\lambda(\xi_3)$, 
where $\check\lambda=\check\omega_1$ is the fundamental co-weight dual to $\alpha_1$ and 
$\xi_3$ is a $3$-th primitive root of unity. According to \cite{RLYG}, $\theta$ is a stable inner automorphism of order $3$ with normalized Kac coordinates
\[1\ 1\Rrightarrow 0.\]
Observe that if we omit $s_0$ and double remaining the Kac coordinates we 
obtain 
\[2\Rrightarrow 0,\]
which is the weighted Dynkin diagram for the nilpotent orbit $G_2(2)$.
This implies $\theta$ is equal to $\theta_N$ in \S\ref{SD} for some $N\in G_2(2)$. 

We have $G_0=\on{GL}_2(\bC)$ and $\fg_1=\fg(1)\oplus\fg(-2)$
, $\fg(1)=\bigoplus_{k=0}^3\fg_{\alpha_1+k\alpha_2}$, 
$\fg(-2)=\fg_{-2\alpha_1-3\alpha_2}$.
As a representation of $G_0=\on{GL}_2(\bC)$, we have 
\begin{equation}\label{iso}
\fg(1)\is\on{det}^2\otimes P_3,\ \  \fg(-2)\is\on{det}^{-1}\otimes P_0,
\end{equation}
where $P_d$ is the space of homogeneous polynomials of degree $d$ on 
$\bC^2$, with the natural action of $G_0=\on{GL}_2(\bC)$.
Choosing coordinates, we regard a vector $X\in\fg_1$ as a 
pair $(f,z)$, where $f=f(x,y)$ is a binary cubic polynomials over $\bC$ and 
$a\in\bC$. According to \cite[\S7.5]{RY}, we have $(f,z)\in\fg_1^s$ if and only if $z\neq0$ and $f$ has three
distinct roots in the projective line. For any $X\in\fg_1^s$, let us write $X=X_1+X_{-2}$ according to the decomposition $\fg_1=\fg(1)\oplus\fg(-2)$. 
The corresponding 
$\theta$-connection takes the form
\[\nabla^X=d+\frac{X_1}{t}dt+X_{-2}dt.\]
We claim that the residue $X_1$ is in the subregular nilpotent orbit 
$G_2(2)$. In particular, the conjugacy classes of the residue $\Res(\nabla^X)$ is independent of the choice $X\in\fg_1^s$.
To prove the claim, observe that the intersection of $G_2(2)$ with $\fg(1)$ is open dense. Thus to show that 
$X_1$ is in $G_2(2)$ it is enough to show that $\dim\Ad G_0(X_1)=\dim\fg(1)=4$. 
But it follows from the fact that the centralizer $Z_{G_0}(X_1)$ of $X_1$ in $G_0$ is the symmetric group $S_3$
(permuting the roots of $f$, where $f$ is the binary cubic polynomial corresponding to $X_1$ under the isomorphism (\ref{iso})), hence $\dim\Ad G_0(X_1)=\dim G_0=4$.

 \subsection{Type $^2A_{2n}$}
Let $\fg=\mathfrak{sl}_{2n+1}(\bC)$ ($n\geq 1$). Let $\sigma$ be a pinned automorphism of $\fg$.
We define $\theta=\check\rho(-1)\rtimes\sigma\in\Aut(\fg)$. 
According to \cite{RLYG}, it is a stable involution with Kac coordinates
\[1\Rightarrow0\ 0\cdot\cdot\cdot0\ 0\Rightarrow 0.\]
Let $\fg=\fg_0\oplus\fg_1$ be the corresponding grading. 
We give a description of $\fg_0$ and $\fg_1$. Let $V$ be a vector space 
over $\bC$ of dimension $2n+1$ with basis $\{x_{-n},...,x_{-1},x_0,x_{1},...,x_n\}$.
We define an inner product $\langle,\rangle$ on $V$ by the formula
$\langle\sum a_ix_i,\sum b_ix_i\rangle=\sum_{i=-n}^n a_ib_{-i}.$
For any $X\in\mathfrak{gl}(V)$, let $X^*$ be the adjoint of $X$ with respect to this inner product.
Then under the canonical isomorphism $\mathfrak{sl}(V)\is\fg$, we have $\theta(X)=-X^*$ for any $X\in\fg$.
Thus $\fg_0\is\mathfrak{so}(V)=\{X\in\mathfrak{sl}(V)|X=-X^*\}$, $\fg_1=\{X\in\mathfrak{sl}(V)|X=X^*\}$.
Moreover, we have $\fg_1^s=\fg_1\cap\fg^{rs}$, here $\fg^{rs}$
is the open subset of regular semi-simple elements in $\fg$.

Since $m=e=2$ (recall $m$ and $e$ are the order of $\theta$ and $\sigma$), Corollary \ref{positive 1} implies
$\fg_1=\fg_1(0)$. Thus for any $X\in\fg_1$, the $\theta$-connection $\nabla^X$ has the form
\[\nabla^X=d+Xdt.\]
In particular, it is unramified at zero.

Finally, since $\fg^\sigma$ is a simple lie algebra of type $B_{n}$ we have $\dim\fg^\sigma=n(2n+1)$.
Thus for $X\in\fg_1^s$ we have 
\[\on{dim}H^1(\mathbb{P}^1,j_{!*}\bar\nabla^{X,\Ad})=\frac{\#R}{2}-\on{dim}\fg^{\sigma}=\frac{2n(2n+1)}{2}-
n(2n+1)
=0.\]

\end{document}